\def\seq#1#2#3{#1_{#2},\,\ldots,#1_{#3}}
\def\w{\widetilde}
\def\h{\widehat}
\def\b{\overline}
\def\m{\mathfrak m}
\def\uu{{\underline{u}}}
\def\vv{{\underline{v}}}
\def\tt{{\underline{t}}}
\def\nn{{\underline{n}}}
\def\ww{\underline{w}}
\def\mm{\underline{m}}
\def\1{\underline{1}}
\def\0{\underline{0}}
\def\R{\mathbb R}
\def\P{\mathbb P}
\def\Q{\mathbb Q}
\def\LL{{\cal L}}
\def\Z{\mathbb Z}
\def\Q{\mathbb Q}
\def\C{\mathbb C}
\def\S{\mathbb S}
\def\A{\mathbb A}
\def\D{\mathbb D}
\def\E{\mathbb E}
\def\J{\mathbb J}
\def\AA{{\mathcal A}}
\def\OO{{\mathcal O}}
\def\BB{{\mathcal B}}
\def\CC{{\mathcal C}}
\def\XX{{\mathcal X}}
\def\DD{{\mathcal D}}
\def\oDD{\stackrel{\circ}{\cal D}}
\def\oE{\stackrel{\circ}{E}}
\def\frW{{\mathfrak W}}
\def\frC{{\mathfrak C}}
\newtheorem{theorem}{Theorem}
\newtheorem{corollary}{Corollary}
\newtheorem{lemma}{Lemma}
\newtheorem{proposition}{Proposition}
\theoremstyle{definition}
\newtheorem{definition}{Definition}
\newtheorem{remark}{Remark}
\newtheorem{example}{Example}
\newenvironment{Proof}
{\noindent{\bf Proof\/}}{{ $\Box$}\smallskip\par}
\title{Weil--Poincar\'e series and topology of collections of valuations on rational double points
\footnote{Math. Subject Class. 2010: 14B05, 32S25, 57M27, 16W60.
Keywords: rational double points, algebraic links, Poincar\'e series, topological type.
}
}
\author{
A.~Campillo,
\and F.~Delgado\thanks{The first two authors
partially supported by MCIN/AEI/10.13039/501100011033 and by ``ERDF A way of making
Europe", grant  PGC2018-096446-B-C21.},
\and S.M.~Gusein-Zade\thanks{
The work of the third author was supported by the
grant 21-11-00080 of the Russian Science Foundation.
} }
\date{}
\begin{document}

\def\eps{\varepsilon}

\maketitle

\begin{abstract}
Earlier it was described to which extent the Alexander polynomial in several variables of an algebraic link in the Poincar\'e sphere determines the topology of the link. It was shown that, except some explicitly described cases, the Alexander polynomial of an algebraic link determines the combinatorial
type of the minimal resolution of the curve and therefore the topology of the corresponding link. The Alexander polynomial
of an algebraic link in the Poincar\'e sphere
coincides with the Poincar\'e series of the corresponding set of curve valuations. The latter one can be defined as an
integral over the space of divisors on the $\E_8$-singularity. Here we consider a similar integral
for rational double point surface singularities
over the space of Weil divisors called the
Weil--Poincar\'e series.
We show that, except a few explicitly described
cases the
Weil--Poincar\'e series of a collection of curve valuations on
a rational double point surface singularity determines the topology of the corresponding link.
We give analogous statements for collections of divisorial valuations.
\end{abstract}

\section{Introduction}\label{sec:Introduction}
An algebraic link in the three-dimensional sphere is the intersection $K=C\cap \S_{\eps}^3$ of a germ
$(C,0)\subset (\C^2,0)$ of a complex analytic plane curve with the sphere $\S_{\eps}^3$ of radius $\eps$
centred at the origin in $\C^2$ with $\eps$ small enough. The number $r$ of the components of the link $K$ is equal
to the number of the irreducible components of the curve $(C,0)$. It is well-known that the Alexander polynomial
in $r$ variables determines the topological type of an algebraic link (or, equivalently, the (local) topological
type of the triple $(\C^2,C,0)$): \cite{Yamamoto}. This follows from the fact that the Alexander polynomial
determines the combinatorial type of the minimal embedded resolution of the curve $C$.
The Alexander polynomial is defined for links in three-dimensional manifolds which are homology spheres.
One of them is the Poincar\'e sphere which is the intersection of the surface
$S=\{(z_1,z_2,z_3)\in\C^3: z_1^5+z_2^3+z_3^2=0\}$ (the $\E_8$ surface singularity) with the 5-dimensional sphere
$\S_{\eps}^5=\{(z_1,z_2,z_3)\in\C^3: \vert z_1\vert^2+\vert z_2\vert^2+\vert z_3\vert^2=\eps^2\}$.
An algebraic link in the Poincar\'e sphere is the intersection of a germ $(C,0)\subset (S,0)$ of a complex
analytic curve in $(S,0)$ with the sphere $\S_{\eps}^5$ of radius $\eps$ small enough.
In \cite{MZ}, it was analyzed to which extent the Alexander polynomial in several variables of an algebraic link
in the Poincar\'e sphere determines the topological type of the link or rather the combinatorial type of the
minimal embedded resolution of the corresponding curve. It was shown that, in general, the Alexander polynomial
does not determine the combinatorial type of the minimal embedded resolution of the curve defining an algebraic link
in the Poincar\'e sphere. However, if the strict transform of a curve in $(S,0)$ in the minimal resolution of
$(S,0)$ does not intersect the component corresponding to the end of the longest tail in the graph $\E_8$
of the resolution at a smooth point of the exceptional divisor, then the Alexander polynomial of the corresponding
link determines the combinatorial type of the minimal resolution of the curve and therefore the topology of
the corresponding link.

A valuation (discrete of rank one) on the ring $\OO_{V,0}$ of germs of functions on a
complex analytic variety
$(V,0)$ is a function $v: \OO_{V,0}\to \Z_{\ge 0}\cup \{+\infty\}$ such that
\begin{enumerate}
 \item[1)] $\nu(\lambda g)=\nu(g)$ for $\lambda\in\C$, $\lambda\ne 0$;
 \item[2)] $\nu(g_1+g_2)\ge\min(\nu(g_1),\nu(g_2))$;
 \item[3)] $\nu(g_1g_2)=\nu(g_1)+\nu(g_2)$.
\end{enumerate}
We permit a valuation to have the value infinity for a non-zero element. (In this case some authors speak
about ``semivaluations'').

An irreducible curve germ $(C,0)$ in a germ of a complex analytic variety $(V,0)$ defines a valuation $v_C$
on the ring $\OO_{V,0}$ of germs of functions on $(V,0)$ (called a curve valuation).
Let $\varphi:(\C,0)\to(V,0)$ be a parametrization
(an uniformization) of the curve $(C,0)$, that is ${\text{Im\,}}\varphi=(C,0)$ and $\varphi$ is an isomorphism
between punctured neighbourhoods of the origin in $\C$ and in $C$. For a function germ $f\in\OO_{V,0}$,
the value $v_C(f)$ is defined as the degree of the leading term in the Taylor series of the function
$f\circ\varphi:(\C,0)\to\C$:
$$
f\circ\varphi(\tau)=a\tau^{v_C(f)}+{\text{\ terms of higher degree,}}
$$
where $a\ne 0$; if $f\circ\varphi\equiv0$, one defines $v_C(f)$ to be equal to $+\infty$.

A collection $\{(C_i,0)\}$ of irreducible curves in $(V,0)$, $i=1,\ldots,r$,
defines the collection $\{v_{C_i}\}$ of valuations.
To a collection $\{v_i\}$ of discrete rank one valuations on $\OO_{V,0}$, $i=1,\ldots,r$,
one may associate, as in \cite{CDK}, a Poincar\'e series
$P_{\{v_i\}}(t_1,\ldots,t_r)\in\Z[[t_1,\ldots, t_r]]$.
The collection $\{v_i\}$ defines a multi-index filtration on $\OO_{V,0}$ by
\begin{equation}\label{eq:filtration}
 J(\uu) = \{g\in \OO_{V,0} : \vv(g)\ge \uu\}\; ,
\end{equation}
where
$\uu=(u_1,\ldots, u_r)\in \Z_{\ge 0}^r$,
$\vv(g)=(v_1(g),\ldots,v_r(g))$
and $\uu'=(u'_1, \ldots, u'_r)\ge \uu = (u_1,\ldots, u_r)$ if and only if $u'_i\ge
u_i$ for all $i=1,\ldots,r$.
Equation~(\ref{eq:filtration}) defines the subspaces $J(\uu)$ for all $\uu\in \Z^r$.
The Poincar\'e series of the filtration $\{J(\uu)\}$ (or of the collection
$\{v_i\}$ of valuations) is defined by:
\begin{equation}\label{eq:CDK}
 P_{\{v_i\}}(t_1, \ldots, t_r)=
 \frac{\LL(t_1, \ldots, t_r)\cdot\prod_{i=1}^r(t_i-1)}{t_1\cdot\ldots\cdot t_r-1}\; ,
\end{equation}
where
$$
\LL(t_1, \ldots, t_r):=
\sum\limits_{\uu\in\Z^r} \dim(J(\uu)/J(\uu+\1))\cdot\tt^{\,\uu}\, ,
$$
$\1=(1, 1,\ldots, 1)\in\Z^r$. This definition makes sense if and only if all the quotients
$J(\uu)/J(\uu+\1)$ are finite-dimensional.

In~\cite{Duke} it was shown that, for $(V,0)=(\C^2,0)$, the Poincar\'e series $P_{\{v_{C_i}\}}(t_1,\ldots,t_r)$ of
a collection of (different) curve valuations coincides with the Alexander polynomial $\Delta^C(t_1,\ldots,t_r)$ in
$r$ variables of the algebraic link defined by the curve $C=\bigcup\limits_{i=1}^rC_i$ for $r>1$. (For $r=1$ one has
$P_{v_C}(t)=\frac{\Delta_C(t)}{1-t}$.) In~\cite{CMH} it was shown that the same holds for an algebraic link in
the Poincar\'e sphere. In~\cite{FAOM} it was proved that the Poincar\'e series of a collection of divisorial
valuations on $\OO_{\C^2,0}$ (computed in \cite{DG}) determines the combinatorial type of the minimal resolution
of the collection. (In general, this is not the case for a collection consisisting both of curve and divisorial
valuations.)

In~\cite{MZ}, it was discussed to which extent the Alexander polynomial in several variables of an algebraic link
in the Poincar\'e sphere (that is the Poincar\'e series of the corresponding curve) determines the topology of
the link or rather the combinatorial type of the minimal (embedded) resolution of the curve on the $\E_8$ surface
singularity. It was shown that two curves (even irreducible ones) with combinatorially different minimal
resolutions may have equal Alexander polynomials. However, if the strict transform of a curve does not
intersect one particular component of the exceptional divisor of the minimal resolution of the $\E_8$ surface
singularity, namely the one corresponding to the end of the longest tail in the corresponding $\E_8$-diagram,
then its Poincar\'e series (that is the Alexander polynomial of the corresponding link) determines the combinatorial
type of the minimal resolution of the curve and therefore the topology of the corresponding link. There were
given analogues of these statements for collections of divisorial valuations on the $\E_8$ surface singularity.

For other surface singularities (say, for rational ones, for whom one has a formula for the Poincar\'e series)
the (classical) Poincar\'e series for a collection of curve or divisorial valuations
does not determine the
combinatorial type of the minimal resolution even in the simplest case of the $\A_k$ singularities. The Poincar\'e
series of a collection of valuations can be interpreted as an integral with respect to the Euler characteristic
over the space of Cartier divisors (appropriately defined: see below). We consider an analogue of the Poincar\'e
series which is the same integral over the space of all (that is Weil) divisors. One has an equation for this
(``Weil--Poincar\'e'') series similar to the one for the smooth case or for the $\E_8$ surface singularity.
(The Weil--Poincar\'e series of a collection of curve or divisorial valuations is a fractional power series.)
We show that, except a few cases (somewhat similar to the exception for the $\E_8$ singularity), the
Weil--Poincar\'e series of a collection of curve valuations or of a collection of divisorial valuations on
a rational double point surface singularity determines the combinatorial type of the minimal resolution
(and thus the topology of the corresponding link in the curve case) up to the possible symmetry of the
minimal resolution graph of the surface singularity. (It is somewhat curious that
exceptions exist for the $\E_7$ and for the $\E_8$
surface singularities, i.e.\ precisely for those whose minimal resolution graphs have no non-trivial symmetries.)

\section{The Weil--Poincar\'e series}\label{sec:Definition}
Let $(S,0)$ be a normal surface singularity and let $v_i$, $i=1, \ldots, r$, be either a curve valuation
on $\OO_{S,0}$ defined by an irreducible curve $(C_i,0)\subset(S,0)$, or a divisorial valuation on $\OO_{S,0}$
defined by a component of the exceptional divisor $\DD$ of a resolution $\pi:(\XX,\DD)\to(S,0)$ of the surface $S$.
Assume that $\pi:(\XX,\DD)\to(S,0)$ is a resolution of the surface $S$ which is, at the same time, a resolution
of the collection $\{v_i\}$ of valuations, that is the total transform of the union of the curves $C_i$ (such that
$v_i$ is the curve valuation $v_{C_i}$) is a normal crossing divisor on $\XX$ and each divisor defining
the divisorial valuation from the collection $\{v_i\}$ is present in $\DD$.

Let $\DD=\bigcup\limits_{\sigma\in\Gamma} E_{\sigma}$ be the representation of the exceptional divisor $\DD$
as the union of its irreducible components. For $\sigma\in\Gamma$, let $\oE_{\sigma}$ be the ``smooth part''
of the component $E_{\sigma}$ in the total transform of the curve $\bigcup_i C_i$, i.~e., the component $E_{\sigma}$
itself minus the intersection points with all other components of the exceptional divisor $\DD$ and with
the strict transforms of the curves $C_i$.
A {\em curvette} corresponding to a component $E_{\sigma}$ of the resolution is the
blow-down of
a germ of a smooth curve transversal to $E_{\sigma}$ at a point of $\oE_{\sigma}$.
For $i\in\{1, 2, \ldots, r\}$, let $\tau(i)$ be either the index
of the component $E_{\tau(i)}$ which intersects the strict transform of the curve $C_i$ (if $v_i$ is a curve
valuation), or the index of the component which defines the divisorial valuation $v_i$. Let
$(E_{\sigma}\circ E_{\delta})$ be the intersection matrix of the components $E_{\sigma}$. The diagonal entries
of this matrix are negative integers and a non-diagonal entry is equal to $1$ if the components $E_{\sigma}$ and
$E_{\delta}$ intersect and is equal to $0$ otherwise. Let $(m_{\sigma\delta}):=-(E_{\sigma}\circ E_{\delta})^{-1}$.
The entries $m_{\sigma\delta}$ are positive rational numbers whose denominators divide the determinant $d$ of
the matrix $-(E_{\sigma}\circ E_{\delta})$. For $\sigma\in\Gamma$, let
$\mm_{\sigma}:=(m_{\sigma\tau(1)}, m_{\sigma\tau(2)}, \ldots, m_{\sigma\tau(r)})\in \Q_{>0}^r$.

\begin{definition}\label{def:WPoincare}
 The {\em Weil--Poincar\'e series} ({\em W--Poincar\'e series} for short) of the collection of valuations $\{v_i\}$ is
 \begin{equation}\label{eq:def_W_Poincare}
 P^W_{\{v_i\}}(\tt):=\prod_{\sigma\in\Gamma}\left(1-\tt^{\mm_{\sigma}}\right)^{-\chi(\oE_{\sigma})}\in
 \Z[[t_1^{1/d}, t_2^{1/d}, \ldots, t_r^{1/d}]]\,,
 \end{equation}
 where $\tt=(t_1, t_2, \ldots, t_r)$, for $\mm=(m_1, \ldots, m_r)\in \Q_{\ge0}^r$,
 $\tt^{\mm}=t_1^{m_1}\cdot\ldots\cdot t_r^{m_r}$.
\end{definition}

\begin{remark}
 One can see that the W--Poincar\'e series $P^W_{\{v_i\}}(\tt)$ is well-defined, i.e., does not
 depend on the choice of the resolution $\pi:(\XX,\DD)\to(S,0)$.
 This follows from the fact that a resolution of the collection $\{v_i\}$ can be obtained from the minimal one
 by additional blow-ups either at smooth points of the total transform of the curve $\bigcup_i C_i$ or at
 intersection points of it.
\end{remark}

If $(S,0)$ is smooth or if it is the $\E_8$-surface singularity, the W--Poincar\'e series of the collection
$\{v_i\}$ coincides with the usual Poincar\'e series of $\{v_i\}$ described above: \cite{Duke}, \cite{CMH},
\cite{MZ}.

\begin{remark}
 For the case when $(S,0)$ was a rational surface singularity and $v_i$ were divisorial
 valuations corresponding to all the components of the exceptional divisor of a resolution of $(S,0)$, the series
 $P^W_{\{v_i\}}(\tt)$ was defined in \cite{Invent} and \cite{CMH} and used in \cite{FAA}; see also~\cite{Nemethi}.
\end{remark}


\section{Weil--Poincar\'e series and integrals with respect to the Euler characteristic}\label{sec:Integlal}
In \cite{IJM} it was (essentially) shown that the Poincar\'e series $P_{\{v_i\}}(\tt)$ of a collection $\{v_i\}$
of valuations (curve or divisorial ones) on the ring $\OO_{X,0}$ of germs of functions on a variety $X$ can be
given by the equation
$$
P_{\{v_i\}}(\tt)=\int_{\P\OO_{X,0}}\tt^{\vv}\,d\chi\,,
$$
where the right hand side is the inegral with respect to the Euler characteristic over the projectivization
of $\OO_{X,0}$ (properly defined), $t_i^{\infty}:=0$ (see also \cite[Proposition 1.1]{IJM-2010}).
In~\cite{IJM} it was shown that the Poincar\'e series of a collection of
curve valuations on $\OO_{\C^2,0}$ can be written as an integral with respect to the Euler characteristic over the
configuration space of effective divisors on the smooth part of the exceptional divisors of the embedded
resolution of the union of curves.

Let $(S,0)$ be a normal surface singularity,
let $\{v_i\}$ be a collection of curve or divisorial valuations on $\OO_{S,0}$, and let $\pi:(\XX,\DD)\to(S,0)$
be a resolution of the collection (not the minimal one, in general). Let $E_{\sigma}$, $\sigma\in\Gamma$, be the
irreducible components of the exceptional divisor $\DD$ and let $\oE_{\sigma}$ be the ``smooth part'' of $E_{\sigma}$
in the total transform of the union $(C,0)=\bigcup_i(C_i,0)$ of the irreducible curves $(C_i,0)$ defining curve
valuations from the collection (i.~e.\ $E_{\sigma}$ minus the intersection points with other components of $\DD$
and with the total transforms of the curves $C_i$). Let
$$
Y^{\pi}:=\prod_{\sigma\in\Gamma}\left(\coprod_{k=0}^{\infty}S^k\oE_{\sigma}\right)=
\coprod_{\{k_{\sigma}\}\in \Z^{\Gamma}_{\ge 0}}\prod_{\sigma}S^{k_{\sigma}}\oE_{\sigma}
$$
be the configuration space of effective divisors on $\oDD=\bigcup_{\sigma}\oE_{\sigma}$.

Let $\vv:Y^{\pi}\to \Q_{\ge0}^r$ be the function which sends the component $\prod_{\sigma}S^{k_{\sigma}}\oE_{\sigma}$
of $Y^{\pi}$ to $\sum\limits_{\sigma\in\Gamma}k_{\sigma}\mm_{\sigma}$. Let $\OO_{S,0}^{\pi}$ be the set of
non-zero function germs on $(S,0)$ such that the strict transform of the zero-level curve $\{f=0\}$ intersects $\DD$
only at points of $\oDD$. One has a map $I^{\pi}$ from $\OO_{S,0}^{\pi}$ to $Y^{\pi}$ which sends a function $f$
to the intersection of the strict transform of the curve $\{f=0\}$ with $\oDD$. Let $Y_{\mathcal C}^{\pi}$ be
the image of $I^{\pi}$. The set $\{\pi\}$ of resolutions of the collection $\{v_i\}$ is a partially ordered set:
a resolution is bigger than another one if it can be obtained from the latter by a sequence of blow-ups.

Let ${\frW}_{S,0}$ be the set of effective divisors (that is Weil divisors) on $(S,0)$ and let
$\frC_{S,0}\subset \frW_{S,0}$ be the set of effective Cartier divisors on $(S,0)$. There is a natural map $J$
from $\P\OO_{S,0}$ to $\frC_{S,0}$. For a resolution $\pi$ of the collection $\{v_i\}$, let $\frW^{\pi}_{S,0}$
be the set of divisors in $\frW_{S,0}$ whose strict transforms intersect the exceptional divisor $\DD$ only at
points of $\oDD$. One has the natural map ${\check{I}}^{\pi}$ from $\frW^{\pi}_{S,0}$ onto 
$Y^{\pi}$ and the map $I^{\pi}$ factorizes through it: $I^{\pi}={\check{I}}^{\pi}\circ J$.

Let $\vv:\frW_{S,0}\to \Q^r_{\ge 0}$ be the composition $\vv\circ I$, where
$\vv:Y^{\pi}\to\Q^r_{\ge0}$ is described above. The map $\vv$ sends $\frC_{S,0}$ to $\Z^r_{\ge0}$.
For a rational surface singularity $S$ the map
$\vv:\frW_{S,0}\to \Q^r_{\ge 0}$ can be defined
in the following way. For any divisor $C\in\frW_{S,0}$, a multiple $kC$ of it, $k>0$, is
a Cartier divisor, i.e.\ the divisor of a holomorphic function $f:S\to \C$. (It is
possible to take $k=\det(E_\sigma, E_\delta)$ for a resolution of the singularity $S$.) Then
$\vv(C)=\vv(f)/k$. For two divisors $C$ and $C'$ on $S$ ($C=\bigcup C_i$, $C'=\bigcup C'_j$, where
$C_i$ and $C'_j$ are irreducible curves) the (rational) number $\sum_i
v_{C_i}(C')=\sum_j v_{C'_j}(C)$ can be regarded as the intersection number $(C, C')$
of the curves
$C$ and $C'$ and will be called (and denoted)
in this was. (In these terms the number
$m_{\sigma\delta}$ is the intersection number of curvettes at the components $E\sigma$ and $E_\delta$.)

We shall show that the Poincar\'e series $P_{\{v_i\}}(\tt)$ can be interpreted as an
integral
$$
\int _{\frC_{S,0}}\tt^{\vv}\,d \chi \;
$$
with respect to the Euler characteristic.
Moreover, we shall show that in the same way
the W-Poincar\'e series $P^W_{\{v_i\}}(\tt)$
is equal to
$$
\int _{\frW_{S,0}}\tt^{\vv}\,d \chi \;
$$
For that we have to define such integrals.

To give the definition we shall consider arcs and divisors on $(S,0)$ as arcs and
divisors on a resolution
$\pi: (\XX,\DD)\to (S,0)$. In this case an arc is represented locally (in some local coordinates)
by a pair of power series in a parameter $\tau$.
Let $\LL_{S,0}$ be the space of arcs (that is parametrized curves) on $(S,0)$ and let
$B = \LL_{S,0}/{\text Aut}(\C,0)$ be the space of branches, i.e. non-parametrized arcs. Let
$\BB = \bigsqcup_{k=0}^\infty S^k B$. Each element of $\BB$ represents a (effective)
divisor on $(S,0)$. However some divisors are represented by different elements of
$\BB$. This can be explained by the following situation. Let
$\check\gamma\in B$ be a branch, It is represented by an arc $\gamma=\gamma(\tau)$.
Then the branch $\check\gamma^k$ represented by the arc $\gamma(\tau^k)$ defines the
same divisor as the collection of $k$ copies of $\check\gamma$.
Let us call a branch $\check\gamma$ (represented by an arc $\gamma$)
{\it primitive\/} if $\gamma$ is an uniformization of its image and let
$B_0\subset B$ be the set of primitive branches. One has
$\frW_{S,0} = \bigsqcup_{k=0}^{\infty} S^k B_0$.

Let $J^m B$ be the space of $m$-jets of branches on $(\XX,\DD)$. The restriction of the truncation map to $B_0$
is surjective. Therefore the image of $\bigsqcup_{k=0}^\infty S^k B_0$ in $\bigsqcup_{k=0}^\infty J^m B$ is the
same as the one of $\bigsqcup_{k=0}^\infty B$. This produces a problem to use the image to define the integral
over $\frW_{S,0}$ through this truncation. To avoid this problem, let us consider the subspace $J^m_{prim} B\subset J^m B$
consisting of the jets each representative of whom is primitive (a jet is a class of branches).
Let $\J^m = \bigsqcup_{k=0}^{\infty} S^k J^m_{prim} B$.

Let $w_i: \frW_{S,0}\to \Q_{\ge 0}\cup \{\infty\}$, $i=1,2,\ldots, r$ be functions on
the set of Weil divisors. Let
$w_i^{m}: \J^m\to \Q_{\ge 0}\cup\{\infty\}$ be the function defined by
$w_i^m([a]) =\sup_{a\in [a]} w_i(a)$ where
$[a]\subset \bigsqcup_{k=0}^{\infty} S^k B_0\subset W$ is the equivalence class of
the
$m$-jet $a$.
Let $\ww:= (\seq w1r): \frW_{S,0}\to (\Q_{\ge 0}\cup \{\infty\})^r$ and
$\ww^m:= (\seq {w^m}1r): \J^m \to (\Q_{\ge 0}\cup \{\infty\})^r$. We shall say that
the function $\ww$ is constructible if
$\ww^m$ is constructible for all $m$ (and therefore integrable with respect to the
Euler characteristic).

\begin{definition}
The integral with respect to the Euler characteristic of the function $\tt^{\ww(-)}$
over the space $\frW_{S,0}$ is defined by
\begin{equation}\label{DefInt}
\int\limits_{\frW_{S,0}} \tt^{\ww(-)} d\chi = \lim_{m\to \infty} \int\limits_{
\J^m_{S,0}} \tt^{\ww^{m}(-)} d \chi \in \Z[[t_1^{1/d}, \ldots, t_r^{1/d}]]\; ;
\end{equation}
where the limit in the right hand side is in the sense of the
$\langle \seq t1r\rangle$-adic topology on
$\Z[[t_1^{1/d}, \ldots, t_{r}^{1/d}]]$,
$\langle \seq t1r\rangle$  is the ideal generated by $\seq t1r$.
\end{definition}

If the right hand side of (\ref{DefInt}) makes no sense, i.e. the limit does not exist,
we regard the function $\tt^{\ww(-)}$ as a non-integrable one. For a subset
$\AA\subset \frW_{S,0}$ and a function
$\ww: \AA\to \Z[[t_1^{1/d}, \ldots, t_{r}^{1/d}]]$, the integral
$\int_{\AA}\tt^{\ww(-)} d \chi$ is understood as
$\int_{\frW_{S,0}}\tt^{\h\ww(-)} d \chi$, where
$\h{\ww}(-)$ is the extension of the function $\ww$ by $+\infty$ outside of $\AA$ (recall that
$t^{+\infty}=0$).

Now let $v_i$, $i=1,\ldots,r$ be curve and/or divisorial valuations on $(S,0)$. They define natural maps
(also denoted by $v_i$) from $\frW_{S,0}$ to $\Z[[t_1^{1/d}, \ldots, t_{r}^{1/d}]]$. (In this case one has
$v_i(a+b) = v_i(a)+v_i(b)$.) Moreover, in this case one assumes that $d = \det(-(E_{\sigma}\cdot E_{\delta}))$.

Let $\J^m_\pi$ be the set of jets of divisors which intersect the exceptional divisor $\DD$ of the resolution
$\pi$ only at points of $\oDD$. There is a natural map from $\J^m_{\pi}$ to $Y^{\pi}$. One can see that the
preimage of a point from $Y^\pi$ has the Euler characteristic equal to $1$.
This follows from the following arguments. An arc at a point of $\oDD$ in some local coordinates $(u,v)$ such
that $\oDD$ is given by the equation $u=0$ can be written as $u=\tau^s$, $v$ is a (truncated) series in $t$.
For a fixed $s$ the set of jets of these arcs has the Euler characteristic equal to $1$ (being isomorphic to a
complex affine space). If $s=1$, the jet belongs to $J^{\cdot}_{prim}B$. If $s>1$, the set of jets not belonging
to $J^{\cdot}_{prim}B$ has the Euler characteristic equal to $1$ (being also isomorphic to a complex affine space).
Therefore the set of jets belonging to $J^{\cdot}_{prim}B$ has the Euler characteristic equal to $0$. The
preimage of a point from $Y^{\pi}$ is the union of products of symmetric powers of
these spaces.
All of them have the Euler characteristics equal to zero
except the product of the symmetric powers of the
spaces of the spaces of arcs with $s=1$ whose Euler characteristic is equal to $1$.

The fact that the preimage of a point from $Y^\pi$ has the Euler characteristic equal to $1$ (alongside with
the Fubini formula) implies the following statements.

\begin{proposition}\label{prop:Poincare}
$$
\int\limits_{\frC_{S,0}} \tt^{\vv(-)} d \chi =
\lim_{\{\pi\}} \int\limits_{Y^\pi_{\CC}} \tt^{\vv(-)} d \chi = P_{\{v_i\}}(\tt)\,.
$$
\end{proposition}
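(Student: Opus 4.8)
The plan is to establish the two equalities in turn, in both cases by combining the Fubini formula for the integral with respect to the Euler characteristic with the fact recalled just above --- that the preimage of a point of $Y^{\pi}$ under the truncation (jet) map has Euler characteristic $1$ --- and, for the equality with the Poincar\'e series, with the representation $P_{\{v_i\}}(\tt)=\int_{\P\OO_{S,0}}\tt^{\vv}\,d\chi$ of \cite{IJM} (see also \cite[Proposition~1.1]{IJM-2010}).

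For the first equality I would argue as follows. Every effective Cartier divisor on $(S,0)$ is ``good'' for some resolution, so $\frC_{S,0}$ is the directed union of the sets $\frC^{\pi}_{S,0}:=\frC_{S,0}\cap\frW^{\pi}_{S,0}$ over the poset of resolutions of the collection $\{v_i\}$; hence, by the definition of $\int_{\frC_{S,0}}\tt^{\vv}\,d\chi$ (through jets, with $\tt^{\vv}$ extended by $0$ off $\frC_{S,0}$) together with the $\langle t_1,\ldots,t_r\rangle$-adic continuity, one has $\int_{\frC_{S,0}}\tt^{\vv}\,d\chi=\lim_{\{\pi\}}\int_{\frC^{\pi}_{S,0}}\tt^{\vv}\,d\chi$. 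For a fixed $\pi$ the restriction of $\tt^{\vv}$ to $\frC^{\pi}_{S,0}$ is pulled back along $\check{I}^{\pi}\colon\frC^{\pi}_{S,0}\to Y^{\pi}_{\CC}$ (its value on $\prod_{\sigma}S^{k_{\sigma}}\oE_{\sigma}$ being $\tt^{\sum_{\sigma}k_{\sigma}\mm_{\sigma}}$), so Fubini reduces $\int_{\frC^{\pi}_{S,0}}\tt^{\vv}\,d\chi$ to $\int_{Y^{\pi}_{\CC}}\chi\bigl((\check{I}^{\pi})^{-1}(y)\bigr)\,\tt^{\vv}\,d\chi$; over $Y^{\pi}_{\CC}$ the jet-level fibres of $\check{I}^{\pi}$ coincide with the fibres of the jet map for all (Weil) divisors, and these have Euler characteristic $1$ by the computation above, so $\int_{\frC^{\pi}_{S,0}}\tt^{\vv}\,d\chi=\int_{Y^{\pi}_{\CC}}\tt^{\vv}\,d\chi$ and the first equality follows.

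For the second equality, $\P\OO_{S,0}$ is the directed union of the (projectivizations of the) sets $\OO_{S,0}^{\pi}$, so $P_{\{v_i\}}(\tt)=\int_{\P\OO_{S,0}}\tt^{\vv}\,d\chi=\lim_{\{\pi\}}\int_{\P\OO_{S,0}^{\pi}}\tt^{\vv}\,d\chi$; using the factorization $I^{\pi}=\check{I}^{\pi}\circ J$ one checks that on jets the fibres of $I^{\pi}$ over a point of $Y^{\pi}_{\CC}$ are again of Euler characteristic $1$ --- the $J$-fibres are the group of units of $\OO_{S,0}$ modulo scalars, a complex affine space at each jet level, and the $\check{I}^{\pi}$-fibres contribute $1$ as before --- so Fubini gives $\int_{\P\OO_{S,0}^{\pi}}\tt^{\vv}\,d\chi=\int_{Y^{\pi}_{\CC}}\tt^{\vv}\,d\chi$ and therefore $\lim_{\{\pi\}}\int_{Y^{\pi}_{\CC}}\tt^{\vv}\,d\chi=P_{\{v_i\}}(\tt)$. (Alternatively the middle integral can be evaluated directly as a product over the vertices of $\Gamma$, in the spirit of \cite{IJM}, reproducing the known formula for $P_{\{v_i\}}(\tt)$.)

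The step that really needs care is the identification of the Cartier locus $Y^{\pi}_{\CC}\subset Y^{\pi}$. For a \emph{rational} singularity $(S,0)$ a divisor in $\frW^{\pi}_{S,0}$ meeting $\oE_{\sigma}$ with total multiplicity $k_{\sigma}$ is a Cartier divisor if and only if all the numbers $\sum_{\delta}m_{\sigma\delta}k_{\delta}$ --- the multiplicities of the $E_{\sigma}$ in its total transform --- are integers, a condition on $\{k_{\sigma}\}$ alone; consequently $Y^{\pi}_{\CC}$ is precisely a union of connected components of $Y^{\pi}$, and over such a component the strict transform forces the (now integral) total transform, which by rationality is the divisor of a germ in $\OO_{S,0}$, so that the fibres of $\check{I}^{\pi}$ over $Y^{\pi}_{\CC}$ coincide with the full Weil fibres and the recalled Euler-characteristic computation applies verbatim. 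The remaining verifications --- constructibility of $\tt^{\vv}$ and the interchange of the $\langle t_1,\ldots,t_r\rangle$-adic limit with the limit over the poset of resolutions (for a fixed truncation order only resolutions obtained by a bounded number of blow-ups matter) --- are routine.
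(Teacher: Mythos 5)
Your argument is correct and is essentially the paper's own proof: the paper derives both propositions in one stroke from the Fubini formula together with the fact that the jet-level fibres over points of $Y^{\pi}$ have Euler characteristic $1$, combined with the representation $P_{\{v_i\}}(\tt)=\int_{\P\OO_{S,0}}\tt^{\vv}\,d\chi$ from \cite{IJM}, \cite{IJM-2010}, exactly as you do. Your extra verification that, for a rational singularity, $Y^{\pi}_{\CC}$ is a union of connected components of $Y^{\pi}$ (so that over it the Cartier fibres coincide with the Weil ones and the Euler-characteristic computation applies) is a correct filling-in of a detail the paper leaves implicit.
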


\begin{proposition}\label{prop:Weil-Poincare}
$$
\int\limits_{\frW_{S,0}} \tt^{\vv(-)} d \chi =
\lim_{\{\pi\}} \int\limits_{Y^\pi} \tt^{\vv(-)} d \chi =
\prod_{\sigma\in \Gamma} (1-\tt^{\mm_{\sigma}})^{-\chi(\oE_{\sigma})} \; .
$$
\end{proposition}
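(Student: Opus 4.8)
The plan is to prove the two asserted equalities in turn. I would begin with the second one, showing that \emph{for every} resolution $\pi$ of the collection one already has $\int_{Y^\pi}\tt^{\vv(-)}\,d\chi=\prod_{\sigma\in\Gamma}(1-\tt^{\mm_{\sigma}})^{-\chi(\oE_{\sigma})}$, so that the limit over the directed set $\{\pi\}$ is this constant series. For this I would use the product decomposition $Y^\pi=\prod_{\sigma\in\Gamma}\bigl(\coprod_{k\ge0}S^{k}\oE_{\sigma}\bigr)$ together with the additivity of $\vv$ with respect to it: on the stratum $\prod_{\sigma}S^{k_{\sigma}}\oE_{\sigma}$ one has $\vv=\sum_{\sigma}k_{\sigma}\mm_{\sigma}$. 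Multiplicativity of the integral with respect to the Euler characteristic over a product (the Fubini formula) then gives
$$
\int_{Y^\pi}\tt^{\vv(-)}\,d\chi=\prod_{\sigma\in\Gamma}\ \sum_{k=0}^{\infty}\chi(S^{k}\oE_{\sigma})\,\tt^{k\mm_{\sigma}}\,,
$$
the inner series converging in the $\langle t_1,\dots,t_r\rangle$-adic topology because $\mm_{\sigma}\in\Q_{>0}^{r}$. Applying the Macdonald identity $\sum_{k\ge0}\chi(S^{k}X)\,z^{k}=(1-z)^{-\chi(X)}$ for the Euler characteristic of a complex quasi-projective variety (here $\oE_{\sigma}$ is $\P^1$ with finitely many, possibly zero, points removed) with $z=\tt^{\mm_{\sigma}}$ turns each inner sum into $(1-\tt^{\mm_{\sigma}})^{-\chi(\oE_{\sigma})}$, i.e.\ into $P^W_{\{v_i\}}(\tt)$ by Definition~\ref{def:WPoincare}. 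Since this series is independent of $\pi$ (the Remark after Definition~\ref{def:WPoincare}), the limit over $\{\pi\}$ equals it.

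For the first equality I would compare the jet-space definition $\int_{\frW_{S,0}}\tt^{\vv(-)}\,d\chi=\lim_{m\to\infty}\int_{\J^m}\tt^{\vv^m(-)}\,d\chi$ with the integrals over the $Y^\pi$. Fix a resolution $\pi$. On the subset $\J^m_{\pi}\subset\J^m$ of jets of divisors meeting $\DD$ only along $\oDD$ the function $\vv^m$ is, for $m$ large enough, the pull-back of $\vv\colon Y^\pi\to\Q_{\ge0}^{r}$ along the natural map $\J^m_{\pi}\to Y^\pi$, whose fibres have Euler characteristic $1$ — this is precisely the computation recorded just before the statement (jets of arcs $u=\tau^{s}$, $v\in\C[[\tau]]$: an affine space for each fixed $s$; for $s>1$ the non-primitive part is again an affine space, so the primitive part contributes $0$; the fibre is built from symmetric powers and products of these spaces, the only term with nonzero Euler characteristic being the $s=1$ one, which contributes $1$). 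Fubini then yields $\int_{\J^m_{\pi}}\tt^{\vv^m(-)}\,d\chi=\int_{Y^\pi}\tt^{\vv(-)}\,d\chi$.

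It remains to see that the jets in $\J^m\setminus\J^m_{\pi}$ do not contribute in the limit $m\to\infty$. Such a jet is a truncation of a branch whose strict transform, in $\pi$, still passes through an intersection point of $\DD$ or has high contact with a component of $\DD$ (or with the strict transform of some $C_i$) there; resolving it requires further blow-ups at such points, and since every $m_{\sigma\delta}$ is strictly positive and the new multiplicity vector at a blown-up intersection point $E_{\alpha}\cap E_{\beta}$ equals $\mm_{\alpha}+\mm_{\beta}$, after $j$ such blow-ups every component of the resulting $\mm$ has grown by at least a fixed positive amount times $j$. Hence on the part of $\J^m$ not resolved by $\pi$ — where the number $j$ of required blow-ups is at least of order $m$ — the function $\vv^m$ takes values all of whose components exceed a constant times $m$, so this part contributes $O(\tt^{\,cm})$ for some $c>0$ and vanishes in the $\langle t_1,\dots,t_r\rangle$-adic limit. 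Letting $m\to\infty$, with $\pi$ chosen large enough for each $m$, gives $\int_{\frW_{S,0}}\tt^{\vv(-)}\,d\chi=\lim_{\{\pi\}}\int_{Y^\pi}\tt^{\vv(-)}\,d\chi$, which together with the first paragraph proves the proposition. The same argument, with $\frC_{S,0}$, $Y^\pi_{\CC}$ and $I^{\pi}={\check I}^{\pi}\circ J$ in place of $\frW_{S,0}$ and $Y^\pi$, yields Proposition~\ref{prop:Poincare}.

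I expect the main obstacle to be this last step: making precise, uniformly in $m$, that the jet spaces $\J^m_{\pi}$ exhaust $\J^m$ up to high $\tt$-degree, and that one may interchange the limit in $m$ with the passage to finer resolutions. Concretely one has to control how the value of $\vv$ on a branch grows under a chain of blow-ups at infinitely near intersection points (a Euclidean-algorithm / continued-fraction estimate built on $\mm_{\gamma}=\mm_{\alpha}+\mm_{\beta}$) and deduce a uniform lower bound on the $\tt$-order of the ``unresolved'' contribution. The Euler-characteristic-one statement for the fibres and the Macdonald identity are the other two ingredients, but the former is already carried out in the text and the latter is standard.
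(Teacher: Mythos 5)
Your proposal is correct and follows essentially the paper's own route: the paper proves this proposition precisely by the fibre-Euler-characteristic-one computation for the map $\J^m_{\pi}\to Y^{\pi}$ together with the Fubini formula, the product $\prod_{\sigma\in\Gamma}(1-\tt^{\mm_{\sigma}})^{-\chi(\oE_{\sigma})}$ over $Y^{\pi}$ arising from the same symmetric-power (Macdonald) identity you invoke, so you have reproduced its ingredients and merely added detail. The one place you overstate is the tail estimate (for a fixed $\pi$ a jet outside $\J^m_{\pi}$ may need only one further blow-up, and it is the total $\tt$-order of the unresolved contribution, not every component of $\vv^m$, that grows), but your coupling of $m\to\infty$ with finer resolutions is the intended fix, and the paper itself leaves exactly this step implicit.
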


\begin{corollary}
$$
 P^W_{\{v_i\}}(\tt)=\int\limits_{\frW_{S,0}} \tt^{\vv(-)} d \chi\,.
 $$
\end{corollary}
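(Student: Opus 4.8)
The plan is to read off the corollary directly from Proposition~\ref{prop:Weil-Poincare} combined with Definition~\ref{def:WPoincare}. First I would fix a resolution $\pi:(\XX,\DD)\to(S,0)$ of the collection $\{v_i\}$ and recall that, by Definition~\ref{def:WPoincare}, the W--Poincar\'e series is $P^W_{\{v_i\}}(\tt)=\prod_{\sigma\in\Gamma}\bigl(1-\tt^{\mm_{\sigma}}\bigr)^{-\chi(\oE_{\sigma})}$, the product being taken over the irreducible components $E_{\sigma}$ of $\DD$, with $\oE_{\sigma}$ the smooth part of $E_{\sigma}$ in the total transform of $\bigcup_iC_i$ and $\mm_{\sigma}=(m_{\sigma\tau(1)},\ldots,m_{\sigma\tau(r)})$, where $(m_{\sigma\delta})=-(E_{\sigma}\circ E_{\delta})^{-1}$. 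Next I would invoke Proposition~\ref{prop:Weil-Poincare}, which says that the function $\tt^{\vv(-)}$ on $\frW_{S,0}$ is integrable (so that the limit in~(\ref{DefInt}) exists in the $\langle t_1,\ldots,t_r\rangle$-adic topology of $\Z[[t_1^{1/d},\ldots,t_r^{1/d}]]$) and that $\int_{\frW_{S,0}}\tt^{\vv(-)}\,d\chi$ equals exactly the same product. Identifying the two expressions proves the corollary, the identity holding in $\Z[[t_1^{1/d},\ldots,t_r^{1/d}]]$ with $d=\det(-(E_{\sigma}\cdot E_{\delta}))$.

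The one thing I would spell out is that the combinatorial input on the two sides genuinely coincides. Both Definition~\ref{def:WPoincare} and Proposition~\ref{prop:Weil-Poincare} are stated for an arbitrary resolution $\pi$ of $\{v_i\}$, they use the same component decomposition $\DD=\bigcup_{\sigma\in\Gamma}E_{\sigma}$, the same smooth parts $\oE_{\sigma}$, and the same intersection matrix $(E_{\sigma}\circ E_{\delta})$, hence the same Euler characteristics $\chi(\oE_{\sigma})$ and the same multi-exponents $\mm_{\sigma}$. The Remark following Definition~\ref{def:WPoincare} ensures that the left-hand side does not depend on the choice of $\pi$ (additional blow-ups at smooth or at intersection points of the total transform of $\bigcup_iC_i$ leave the product unchanged), which is consistent with the right-hand side being intrinsically attached to $(S,0)$ and $\{v_i\}$.

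So there is no real obstacle in the corollary itself; all the content sits in Proposition~\ref{prop:Weil-Poincare}. Were one to reprove that proposition, the crux is the stated computation that the natural map $\J^m_\pi\to Y^\pi$ has fibres of Euler characteristic $1$: an arc through a point of $\oE_{\sigma}$, written locally as $u=\tau^s$ with $v$ a truncated series in $\tau$, lies for $s=1$ entirely in $J^{\cdot}_{prim}B$, while for $s>1$ the primitive jets form the complement inside an affine space of a subset which is again an affine space and so have Euler characteristic $0$; a fibre of $\J^m_\pi\to Y^\pi$ is a union of products of symmetric powers of such strata, only the $s=1$ ones surviving, whence Euler characteristic $1$. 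Feeding this into the Fubini formula for integration with respect to the Euler characteristic factors $\int_{Y^\pi}\tt^{\vv(-)}\,d\chi$ as $\prod_{\sigma\in\Gamma}\bigl(\sum_{k\ge0}\chi(S^k\oE_{\sigma})\,\tt^{k\mm_{\sigma}}\bigr)=\prod_{\sigma\in\Gamma}\bigl(1-\tt^{\mm_{\sigma}}\bigr)^{-\chi(\oE_{\sigma})}$, after which one passes to the limit over the directed system $\{\pi\}$ of resolutions. The sole difference from Proposition~\ref{prop:Poincare} is that there one integrates over the image $Y^\pi_{\CC}\subseteq Y^\pi$ of the map $I^\pi$ --- the divisors cut out by functions --- whereas for Weil divisors the map ${\check I}^\pi$ is onto $Y^\pi$; this is precisely what turns the classical Poincar\'e series into its fractional-power Weil counterpart, with the product now running over all components of $\DD$.
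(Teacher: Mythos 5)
Your proposal is correct and matches the paper: the corollary is an immediate identification of the product in Definition~\ref{def:WPoincare} with the value of the integral given by Proposition~\ref{prop:Weil-Poincare}, and your sketch of why the fibres of $\J^m_\pi\to Y^\pi$ have Euler characteristic $1$ (and the Fubini argument) is exactly the paper's own reasoning for that proposition.
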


\section{Curves and divisors on the $\E_7$-singularity whose Weil--Poincar\'e series
do not determine the minimal resolution}\label{sec:Examples}
\begin{example}
 The dual graph of the minimal resolution of the $\E_7$-singularity is shown on Figure~\ref{fig:E_7}. Let $C'$
 be the curvette at the component $E_2$ and let $C''$ be the blow down of a smooth curve ${\widetilde{C}}''$
 on the surface of the resolution tangent to the component $E_7$ at a smooth point (i.~e.\ not at the intersection
 point with $E_6$) with the intersection multiplicity equal to 2 (i.~e.\ the tangency of ${\widetilde{C}}''$
 and $E_7$ is simple). The minimal resolution of the curve $C'$ coincides with the minimal resolution of the
 surface. The dual graph of the minimal resolution of the curve $C''$ is shown on Figure~\ref{fig:resolution_second}.
\begin{figure}
$$
\unitlength=0.50mm
\begin{picture}(80.00,40.00)(0,0)
\thinlines
\put(-15,30){\line(1,0){75}}
\put(60,30){\line(0,-1){30}}
\put(60,15){\vector(1,0){10}}
\put(-15,30){\circle*{2}}
\put(0,30){\circle*{2}}
\put(15,30){\circle*{2}}
\put(30,30){\circle*{2}}
\put(45,30){\circle*{2}}
\put(60,30){\circle*{2}}
\put(60,15){\circle*{2}}
\put(60,0){\circle*{2}}
\put(15,30){\line(0,-1){15}}
\put(15,15){\circle*{2}}
\put(-16,33){{\scriptsize$1$}}
\put(-1,33){{\scriptsize$2$}}
\put(14,33){{\scriptsize$3$}}
\put(29,33){{\scriptsize$5$}}
\put(44,33){{\scriptsize$6$}}
\put(59,33){{\scriptsize$7$}}
\put(55,14){{\scriptsize$9$}}
\put(55,-1){{\scriptsize$8$}}
\put(17,15){{\scriptsize$4$}}
\end{picture}
$$
\caption{The minimal resolution graph of the curve $C''$.}
\label{fig:resolution_second}
\end{figure}
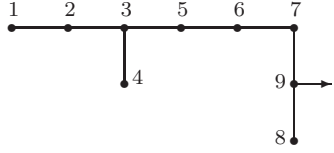
 One can show that
 $$
 P^W_{C'}(t)=P^W_{C''}(t)=\frac{(1-t^{6})(1-t^{8})}{(1-t^{2})(1-t^{3})(1-t^{4})}\,.
 $$
 (The data for these computations and for those in the next example can be taken from the
 matrix~\ref{eq:inverse_for_E_7} on page~\pageref{eq:inverse_for_E_7}.) Therefore the W--Poincar\'e series
 of a curve on the $\E_7$ surface singularity does not determine, in general, the combinatorial type of its
 minimal resolution.
 \end{example}

 \begin{example}
 Let $D'$ be the divisor created by the blow-up of a smooth point of the component $E_9$ of the resolution shown on
 Figure~\ref{fig:resolution_second} and let $D''$ be the divisor created after 3 blow-ups starting at a smooth point
 of the component $E_2$ and produced at each step at a smooth point of the previously created divisor. One can show
 that for the divisorial valuations $v'$ and $v''$ defined by the divisors $D'$ and $D''$ respectively one has
 $$
 P^W_{v'}(t)=P^W_{v''}(t)=\frac{(1-t^{6})(1-t^{8})}{(1-t^{2})(1-t^{3})(1-t^{4})(1-t^9)}\,.
 $$
 Therefore the W--Poincar\'e series of a divisorial valuation on the $\E_7$-singularity does not determine,
 in general, the combinatorial type of the minimal resolution.
 \end{example}

\section{Main statements}\label{sec:Main_res}
Let $(S,0)$ be a rational double point and let $\{v_i\}$, $i=1,\ldots, r$, be a collection of different curve
valuations on $\OO_{S,0}$ defined by irreducible curves $(C_i,0)\subset (S,0)$.

Let us make the following aditional assumptions. If the singularity $(S,0)$ is of type $\E_7$ (respectively of type
$\E_8$), we either assume that the (minimal) resolution process does not contain a blow-up at a smooth point of the
exceptional divisor of the minimal resolution of $(S,0)$ lying on the component $E_7$ (on the component $E_8$
respectively) or assume that does not contain a blow-up at the similar point lying on the component $E_2$
(on the component $E_6$ respectively).

\begin{theorem}\label{theo:Main_curves}
Under the previous assumptions the W--Poincar\'e series
 $P^W_{\{C_i\}}(\tt):=P^W_{\{v_i\}}(\tt)$, $\tt=(t_1,\ldots, t_r)$, of the
curve $C= \bigcup_{i=1}^r C_i$ determines, up to the symmetry of the dual graph of
the minimal resolution of $(S,0)$, the combinatorial type of the minimal (embedded)
resolution of the curve
$(C,0)=\bigcup_{i=1}^r (C_i,0)$
 and therefore the topological type of the link $C\cap L$ in $L=S\cap \S^5_{\eps}$,
i.~e.\ the topological type
 of the pair $(L, C\cap L)$.
\end{theorem}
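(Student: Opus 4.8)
The plan is to reduce the statement to a combinatorial problem about the rational double point graphs $\A_n, \D_n, \E_6, \E_7, \E_8$ and the intersection matrix data $(m_{\sigma\delta})$. First I would recall that, by the Corollary at the end of Section~3, $P^W_{\{C_i\}}(\tt)=\prod_{\sigma\in\Gamma}(1-\tt^{\mm_{\sigma}})^{-\chi(\oE_{\sigma})}$, where $\Gamma$ is the set of components of the exceptional divisor of a resolution of the collection $\{v_i\}$, and $\chi(\oE_\sigma)=2-(\text{number of points of }E_\sigma\text{ on other components or strict transforms})$. Since the combinatorial type of the minimal embedded resolution of $(C,0)$ is encoded by the dual graph $\Gamma$ together with the marking $\tau(i)$ of the arrowhead vertices, the task is: from the formal product $\prod_\sigma(1-\tt^{\mm_\sigma})^{-\chi(\oE_\sigma)}$, recover that weighted graph up to the (at most order-two) automorphism group of the dual graph of $(S,0)$. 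The first concrete step is therefore to extract from the series, by the standard argument (as in \cite{MZ} for $\E_8$ and in \cite{Duke} for the smooth case), the \emph{multiset of exponent vectors} $\mm_\sigma$ that occur with nonzero ``multiplicity'' $-\chi(\oE_\sigma)$: a vector $\mm$ with positive coefficient in $\log P^W$ must be an $\mm_\sigma$ with $\chi(\oE_\sigma)<0$ (a rupture or an end of a tail meeting a strict transform), and the vertices with $\chi(\oE_\sigma)=1$ (valence-one vertices not meeting any $C_i$, i.e.\ the ``free ends'') contribute the denominator factors.

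Next I would set up the dictionary between the numbers $m_{\sigma\tau(i)}$ and the resolution graph. For a fixed rational double point $(S,0)$, the matrix $(m_{\sigma\delta})=-(E_\sigma\circ E_\delta)^{-1}$ over the minimal resolution is an explicit finite list (for $\E_7$ it is the matrix~\ref{eq:inverse_for_E_7} referenced in the examples), and after any further blow-up the new entries are obtained from the old ones by the standard recursion for the intersection multiplicities of curvettes; concretely $m_{\sigma'\delta}=m_{\sigma_0\delta}+\text{(distance-type correction)}$ when $E_{\sigma'}$ is obtained by blowing up a point of $\oE_{\sigma_0}$. Using this I would show that the support of the exponent multiset together with the pattern of which coordinates of each $\mm_\sigma$ are proportional determines the \emph{tree} underlying $\Gamma$, the location of the arrowheads, and the self-intersection numbers — exactly as in the smooth and $\E_8$ cases — \emph{provided} one can tell apart the contributions coming from the finitely many tails attached at the rupture vertices of the $\A/\D/\E$ graph. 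This is where the hypothesis in the theorem enters.

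The main obstacle, and the heart of the proof, is the ambiguity illustrated by the Example in Section~4: for $\E_7$ the curvette at $E_2$ and a suitable tangent curve at $E_7$ have identical W--Poincar\'e series, and similarly for $\D$-type one has the symmetry exchanging the two short legs. I would handle this by a case analysis over the six types. For $\A_n$ the dual graph is a chain with a nontrivial flip symmetry, and I expect the series to determine the embedded resolution exactly up to that flip — no extra hypothesis needed. For $\D_n$, $\E_6$ the nontrivial graph automorphism already ``absorbs'' the coincidences, so again the series determines the resolution up to $\mathrm{Aut}(\Gamma)$. For $\E_7$ and $\E_8$, whose graphs are rigid, the coincidence in the Example is genuine, and this is precisely why the theorem's hypothesis forbids a blow-up at a smooth point of $E_7$ (resp.\ $E_8$) \emph{and} forbids the ``twin'' blow-up at $E_2$ (resp.\ $E_6$): under that hypothesis the offending exponent vector $\mm_{\sigma}$ simply does not arise, so the remaining data is unambiguous. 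Concretely, for each type I would (i) list the possible $\mm_\sigma$ vectors arising from the rupture vertices and the ends of the three (or two) tails, (ii) check that these vectors are pairwise non-proportional except for the explicitly excluded pairs, and (iii) conclude that the multiset of $(\mm_\sigma,-\chi(\oE_\sigma))$ reconstructs $\Gamma$ with its markings. Finally, invoking Proposition~\ref{prop:Weil-Poincare} to identify $P^W$ with the product, and the theorem of \cite{MZ}/\cite{Yamamoto}-type that the combinatorial type of the minimal embedded resolution determines the topological type of $(L,C\cap L)$, finishes the argument. I expect the bookkeeping in step (ii) for $\E_7$ and $\E_8$ — verifying that no \emph{unexpected} coincidences occur beyond the one documented in the Example — to be the most delicate part, and I would organize it by computing, from the explicit inverse intersection matrices, the primitive direction of each $\mm_\sigma$ and comparing.
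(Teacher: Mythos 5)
Your plan rests on the claim that one can read off from $P^W_{\{C_i\}}(\tt)$ ``the multiset of exponent vectors $\mm_\sigma$ with multiplicity $-\chi(\oE_\sigma)$'' and then reconstruct the weighted graph ``exactly as in the smooth and $\E_8$ cases''. This is where the real difficulty lies, and your proposal does not supply an argument for it. The series only records vertices with $\chi(\oE_\sigma)\neq 0$: the arrowhead vertices $\tau(i)$ very often have $\chi(\oE_{\tau(i)})=0$ and contribute no factor at all; distinct vertices of the (arbitrarily deep) blow-up graph $\Gamma$ can give proportional or coinciding exponent vectors; and numerator and denominator factors can cancel, so the exponents $\nn_k$ of the unique factorization $\prod_k(1-\tt^{\nn_k})^{s_k}$ are not in bijection with the visible $\mm_\sigma$. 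Identifying a factor whose exponent actually equals some $\mm_{\tau(i)}$ is precisely what the paper's proof is about: it proceeds by induction on the number of branches via the projection formulas (\ref{eq:E1}) and (\ref{eq:E2}), uses the monotonicity of the ratios $m^j_\alpha/m^i_\alpha$ along geodesics (Lemma~\ref{lemma:4}, Proposition~\ref{prop:P1}) to single out an index $i_0$ with detectable $\mm_{\tau(i_0)}$, proves a dedicated proposition (the $\#E\ge 2$ case, with the sets $A(k)$, $B(k)$) plus an explicit table of degenerate configurations when $\#E=1$ (all branches curvettes at ends of the pre-resolution graph), then recovers $\Gamma_{i_0}$ by the one-branch result (Lemma~\ref{lemma:curve_preresolution}, Proposition~\ref{prop:main_curves}) and glues via the separation vertices as in \cite{MZ}, using Lemma~\ref{lemma:two} to decide whether two branches sit on the same or on symmetry-exchanged parts. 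Your ``bookkeeping over the finitely many tails of the ADE diagram'' is not a substitute: $\Gamma$ is not the ADE diagram but an unbounded modification of it, and your plan says nothing about how the relative position (separation vertices) of the different branches is recovered, which is an essential and separate step.

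There is also a mismatch with the hypothesis. For $\E_7$ (resp.\ $\E_8$) the theorem assumes that the resolution process avoids blow-ups at smooth points of the exceptional divisor on $E_7$ \emph{or} avoids them on $E_2$ (resp.\ on $E_8$ or on $E_6$); it is a disjunction. Your argument that ``the offending exponent vector simply does not arise'' presupposes that both possibilities are excluded, so even if the reconstruction step were justified you would only obtain a weaker statement than Theorem~\ref{theo:Main_curves}; the paper's one-valuation analysis (the two separate case lists for $\sigma_0\neq 7$ and $\sigma_0\neq 2$, and similarly for $\E_8$) is exactly what handles the disjunctive hypothesis, and nothing analogous appears in your proposal.
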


Now let $\{v_i\}$, $i=1,\ldots,r$, be a collection of different divisorial valuations
on $\OO_{S,0}$. In the cases of $\E_7$ and of $\E_8$ singularities we assume the same
restrictions on the resolution process of the collection of divisorial valuations
as for curves above.

\begin{theorem}\label{theo:Main_divisorial}
Under the previous assumptions the W--Poincar\'e series
$P^W_{\{v_i\}}(\tt)$ determines, up to the symmetry of the dual resolution graph of
$(S,0)$, the combinatorial type of the minimal resolution of the collection $\{v_i\}$.
\end{theorem}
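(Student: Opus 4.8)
The plan is to reduce the divisorial statement to the combinatorics of the product formula in Definition~\ref{def:WPoincare}, following the template of the curve case (Theorem~\ref{theo:Main_curves}) but accounting for the two features that distinguish divisorial valuations: the exceptional component $E_{\tau(i)}$ defining $v_i$ has $\chi(\oE_{\tau(i)})=1$ rather than $0$ (its strict transform is not a curve meeting $E_{\tau(i)}$ transversally; instead $E_{\tau(i)}$ itself is ``used up''), and on the minimal resolution $\pi_{\min}$ of the collection each $\oE_\sigma$ is $\P^1$ minus the intersection points with the other components, so $\chi(\oE_\sigma) = 2 - (\text{valence of }\sigma\text{ in the dual graph})$, except that at a $\sigma=\tau(i)$ one extra point is removed. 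Thus on $\pi_{\min}$ the exponent $-\chi(\oE_\sigma)$ in \eqref{eq:def_W_Poincare} is $v_\sigma - 2$ where $v_\sigma$ counts graph-adjacencies plus the number of marked indices $i$ with $\tau(i)=\sigma$. First I would write $P^W_{\{v_i\}}(\tt)$ as a product over $\Gamma$ of factors $(1-\tt^{\mm_\sigma})^{e_\sigma}$ with $e_\sigma = v_\sigma - 2 \in \Z$, and observe that the only factors contributing negative exponents (i.e.\ appearing in the ``denominator'') are the components that are leaves of the resolution graph and carry no mark, plus the marked components themselves contribute with multiplicity equal to the number of marks minus the graph-valence.

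Next I would extract from this rational-power-series identity a list of combinatorial invariants. The key point is that the vectors $\mm_\sigma = (m_{\sigma\tau(1)},\ldots,m_{\sigma\tau(r)})$, for $\sigma$ ranging over the relevant components, are the exponents appearing in the factorization, and the multiset of pairs $(\mm_\sigma, e_\sigma)$ is determined by $P^W_{\{v_i\}}(\tt)$ as soon as one knows that the factorization into such pairs is unique — which holds here because the $\mm_\sigma$ lie in the open positive cone $\Q_{>0}^r$ and a standard argument on the supports of formal power series in $\Z[[t_1^{1/d},\ldots,t_r^{1/d}]]$ gives uniqueness of such product expansions (this is exactly the mechanism already used for $\C^2$ in \cite{Duke} and for $\E_8$ in \cite{MZ}). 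So from $P^W$ one recovers the numbers $m_{\sigma\tau(i)}$ for all $\sigma$ that actually occur with $e_\sigma\ne 0$, together with the graph-valences of those $\sigma$. One then has to show that this partial data, read off from $(m_{\sigma\delta}) = -(E_\sigma\circ E_\delta)^{-1}$, pins down the combinatorial type of $\pi_{\min}$ up to an automorphism of the dual graph of $(S,0)$: this is where the case analysis over the ADE list enters. Here I would go type by type through $\A_k$, $\D_k$, $\E_6$, $\E_7$, $\E_8$, using the explicit inverse intersection matrices (e.g.\ the matrix \eqref{eq:inverse_for_E_7} for $\E_7$) to show that knowing $m_{\sigma\tau(i)}$ for the leaves and marked nodes, plus the valences, forces the positions $\tau(i)$ and the sequence of blow-ups uniquely, modulo the graph symmetry of the base singularity (the $\Z/2$ on $\A_k$ and $\D_{k}$, $k\neq 4$, and the $S_3$ on $\D_4$, while $\E_7,\E_8$ have trivial symmetry — hence the need for the exclusion hypotheses exactly there).

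The main obstacle — and the reason the hypotheses about not blowing up on $E_7$ (resp.\ $E_8$) or the symmetric point $E_2$ (resp.\ $E_6$) are imposed — is that two genuinely different resolution trees can produce the identical multiset $\{(\mm_\sigma, e_\sigma)\}$. Concretely, blowing up repeatedly off the long-tail end can yield exponent vectors $\mm_\sigma$ proportional, after clearing the common denominator $d$, to those obtained from a different branch; the explicit coincidence exhibited in Example~1 and Example~2 for $\E_7$ shows this is a real phenomenon. So the crux of the proof is a finite but delicate verification that, \emph{under} the stated restriction on where blow-ups may occur, no such collision survives: one must check that the map from (restricted) resolution trees to multisets of pairs $(\mm_\sigma, e_\sigma)$ is injective up to graph symmetry. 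I expect this to be handled by a normal-form argument for the resolution tree — reconstruct the base ADE graph from the ``heaviest'' factors (those with $\mm_\sigma$ having the largest total weight typically correspond to the deepest blow-ups), then peel off blow-ups one at a time, at each stage using positivity of the recovered $m$-values to locate the unique point that was blown up — with the excluded configurations being precisely the ones where this peeling is ambiguous. Once injectivity is established, the combinatorial type of $\pi_{\min}$, and hence (in the curve case) the embedded topological type, is determined, completing the proof.
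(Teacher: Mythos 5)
There is a genuine gap: your argument never actually proves the key step, and one of its intermediate claims is false as stated. You propose to read off from the factorization of $P^W_{\{v_i\}}(\tt)$ the multiset of pairs $(\mm_\sigma,e_\sigma)$ together with the valences of the vertices that occur, and then to verify, type by type, that this data pins down the minimal resolution up to graph symmetry. But the map $\sigma\mapsto\mm_\sigma$ is in general not injective (e.g.\ for a single valuation at the vertex $1$ of $\D_k$ the column $m_{\sigma 1}$ takes the value $1$ on all of $E_1,\dots,E_{k-2}$), so the unique factorization only determines the \emph{aggregated} exponent attached to each exponent vector, not the per-vertex exponents $e_\sigma$ nor the valences of individual vertices; the data you claim to recover is simply not available from the series. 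In the divisorial case you also mis-set the exponents: no points of $E_{\tau(i)}$ are removed for the valuation itself, so $\chi(\oE_\sigma)=2-(\text{graph valence})$ with no contribution from ``marks'', and in particular a marked vertex of valence $2$ is entirely invisible in the product. Finally, the crux — that under the $\E_7$/$\E_8$ exclusion hypotheses no two restricted resolution trees give the same series — is exactly what has to be proved, and you defer it (``I would go type by type'', ``I expect this to be handled by a normal-form argument'', ``one must check that the map \dots is injective''); without that verification there is no proof.

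For comparison, the paper takes a different and more economical route which you do not use: for divisorial valuations the projection formula is trivial, namely substituting $t_{i}=1$ in $P^W_{\{v_i\}}(\tt)$ yields the W--Poincar\'e series of the subcollection with $v_i$ removed. This reduces the theorem to the one-valuation case, which is where the explicit ADE case analysis with the inverse intersection matrices is carried out (Lemma~\ref{lemma:curve_preresolution}, Proposition~\ref{prop:main_divis}, following \cite{MZ}); the graphs of the individual valuations are then glued along separation vertices as in \cite{MZ}, with the ambiguity coming from the graph symmetry resolved by the multiplicities $m_{\tau(1)\tau(i)}$ read from the series (the analogue of Lemma~\ref{lemma:two}), which tell whether two marked vertices sit on the same or on different symmetric parts. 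If you want to salvage your direct-reconstruction approach, you would at minimum have to replace the ``multiset of pairs'' step by an argument working only with the aggregated exponents (as the paper's Lemma~\ref{lemma:curve_preresolution} does, via the ordered exponents $m_1\le m_2\le\dots$ and their ratios) and then actually carry out the injectivity check you postpone.
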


\begin{remark}
 To a divisorial valuation $v_i$ one can associate a curve $C_i$: a curvette at the component
 $E_{\tau(i)}$ defining the valuation $v_i$. Theorem~\ref{theo:Main_divisorial} implies that the series
 $P^W_{\{v_i\}}(\tt)$ determines the topological type of the link $\left(\bigcup_{i=1}^r C_i\right)\cap L$ in $L$.
\end{remark}

\begin{remark}
 In a statement like Theorems~\ref{theo:Main_curves} and~\ref{theo:Main_divisorial} one cannot mix
 curve and divisorial valuations in one collection; see an example in \cite{FAOM}.
\end{remark}

\section{The case of one valuation}\label{sec:One_valuation}
Let $(S,0)$ be a rational double point (of type $\A_k$, $\D_k$, $\E_6$, $\E_7$, or $\E_8)$)
 and let $v$ be either a curve valuation (defined by a curve germ
$(C,0)\subset(S,0)$) or a divisorial valuation on
$\OO_{S,0}$. In the latter case, let $(C,0)\subset(S,0)$
be a curvette at the divisor defining the valuation.
(A resolution of a divisorial valuation is at the same time
a resolution of the corresponding curvette, but not vise versa.) The minimal resolution of the valuation $v$ is obtained from the minimal resolution of
the surface $(S,0)$ by a sequence of blow-ups made (at each step) at intersection points of the strict transform
of the curve $C$ and the exceptional divisor.
Let $\pi':(\XX',\DD')\to(S,0)$ be the minimal resolution of the
surface $(S,0)$ such that the strict transform of the curve $C$ intersects the exceptional divisor $\DD'$
at its smooth point. This resolution is obtained from the minimal resolution of $(S,0)$ by blow-ups made
(at each step) at intersection points of the components of the exceptional divisor.

\begin{definition}\label{def:pre-resol}
 The resolution $\pi':(\XX',\DD')\to(S,0)$ of $(S,0)$ will be called the {\em pre-resolution} of the valuation $v$.
\end{definition}

Let $E_{\sigma_0}$ be the component of the exceptional divisor $\DD'$ of the
pre-resolution $\pi'$ intersecting the strict transform of curve $C$ and let $\ell$
be the intersection number of them in the space $\XX'$ of the pre-resolution. We
shall use
the numbering of the components of the exceptional divisor of the minimal
resolution of $(S,0)$ shown on Figures \ref{fig:A_k}, \ref{fig:D_k}, \ref{fig:E_6},
\ref{fig:E_7}, and \ref{fig:E_8} below.
(They are at the same time components of the exceptional divisor of the pre-resolution $\pi'$.)
In the case of the $\E_7$ ($\E_8$) singularity we either assume that $\sigma_0$ is
not $7$ ($8$ respectively) or assume that it is not $2$ ($6$ respectively).
Pay atention that in all the cases excluded from consideration the pre-resolution
$\pi'$ is the minimal resolution of the surface $(S,0)$.

\begin{lemma}\label{lemma:curve_preresolution}
Under the conditions above, the W--Poincar\'e series
$P^W_C(t)$ determines, up to the symmetry of the
resolution graph of the minimal resolution of $(S,0)$,
the pre-resolution $\pi'$, the component $E_{\sigma_0}$ of the exceptional divisor
$\DD'$, and the intersection multiplicity $\ell$.
\end{lemma}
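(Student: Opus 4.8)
The plan is to apply Proposition~\ref{prop:Weil-Poincare} in the one-variable case and then to read the data $(\pi',E_{\sigma_0},\ell)$ off the resulting rational function. Let $\widetilde\pi$ be the minimal embedded resolution of $C$; it is obtained from $\pi'$ by the finite chain of blow-ups --- carried out at the successive intersection points of the strict transform of $C$ with the exceptional divisor --- that separate $C$ from $E_{\sigma_0}$. By Proposition~\ref{prop:Weil-Poincare},
$$
P^W_C(t)=\prod_{\sigma}\bigl(1-t^{m_\sigma}\bigr)^{-\chi(\oE_\sigma)},\qquad m_\sigma:=m_{\sigma\tau(1)},
$$
the product running over the components $E_\sigma$ of the exceptional divisor of $\widetilde\pi$, where $\tau(1)$ indexes the one meeting the strict transform of $C$. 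The first step is to describe the dual graph $\Gamma$ of $\widetilde\pi$ together with the arrow for $C$: it is the dual graph of the minimal resolution of $(S,0)$ (of type $\A_k$, $\D_k$, $\E_6$, $\E_7$ or $\E_8$) with certain edges subdivided --- exactly the subdivisions turning the minimal resolution into $\pi'$ --- to which one attaches at $E_{\sigma_0}$ either a single arrow (if $\ell=1$) or a chain $E_{\sigma_0}-w_\ell-\dots-w_1$ carrying the arrow at $w_\ell$ (if $\ell\ge 2$). Only the leaves ($\chi(\oE_\sigma)=1$) and the branch vertices ($\chi(\oE_\sigma)=2-\mathrm{val}(\sigma)\le-1$) of $\Gamma$ contribute, so $P^W_C(t)$ is the quotient of $\prod_{\text{branch }\sigma}(1-t^{m_\sigma})^{\mathrm{val}(\sigma)-2}$ by $\prod_{\text{leaf }\sigma}(1-t^{m_\sigma})$.

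The second step makes the exponents explicit. For every $\sigma$ one has $m_\sigma=v_\sigma(C)$, the value on $C$ of the divisorial valuation given by $E_\sigma$; for a component $E_\sigma$ already present on $\DD'$ this equals $\ell\,m_{\sigma\sigma_0}$, with the $m_{\sigma\sigma_0}$ being the entries of $-(E_\delta\circ E_\epsilon)^{-1}$ computed in $\pi'$, while for the (at most $\ell$) new components $w_j$ the $m_{w_j}$ are obtained from the enlarged intersection matrix by the evident chain recursion. Since these matrices are the Cartan matrices of the rational double point with some edges subdivided, all the $m_{\sigma\sigma_0}$ are small, explicit rational numbers with denominator dividing $d=\det(-(E_\delta\circ E_\epsilon))$. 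From the rational function $P^W_C(t)$ one then recovers, after cancelling common factors of numerator and denominator, the multiset of leaf exponents (the poles) and the multiset of branch-vertex exponents counted with multiplicity (the zeros).

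The third step is the reconstruction. The denominators occurring among the exponents determine $d$, hence essentially the type of $(S,0)$; the two or three leaf exponents coming from leaves of the minimal resolution graph of $(S,0)$, being of the shape $\ell\,m_{\sigma\sigma_0}$, together with the branch-vertex exponents and with the tail exponents $m_{w_1}$, $m_{w_\ell}$, then determine which edges were subdivided and how --- i.e.\ the pre-resolution $\pi'$ --- the vertex $E_{\sigma_0}$, and the integer $\ell$. Here the symmetry of the dual graph of the minimal resolution of $(S,0)$ --- the flip of $\A_k$, the leg interchange of $\D_k$, the triality of $\D_4$, the involution of $\E_6$ --- is used precisely to merge the reconstructions that differ by such a symmetry, which the statement permits.

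The main obstacle, as I see it, is proving the completeness of this reconstruction: that two triples $(\pi',E_{\sigma_0},\ell)$ not related by a graph symmetry cannot yield the same collection of exponents, hence the same $P^W_C(t)$. I would settle this by a finite verification based on the explicit inverse Cartan matrices; its upshot is that the only residual coincidences occur for the $\E_7$ singularity with $\sigma_0\in\{2,7\}$ and, mirror-symmetrically, for $\E_8$ with $\sigma_0\in\{6,8\}$ --- the prototype being that the curvette at $E_2$ and the curve with $(\sigma_0,\ell)=(7,2)$ share the same W--Poincar\'e series, as computed in the Examples of Section~\ref{sec:Examples}. Because $\E_7$ and $\E_8$ carry no non-trivial graph symmetry, these coincidences genuinely obstruct the statement, which is why the hypothesis excludes either $\sigma_0=7$ or $\sigma_0=2$ for $\E_7$ (respectively either $\sigma_0=8$ or $\sigma_0=6$ for $\E_8$); granting one such exclusion, the finite check shows that $(\pi',E_{\sigma_0},\ell)$ is determined up to symmetry, proving the lemma.
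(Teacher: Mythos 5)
Your strategy is the same in outline as the paper's (expand $P^W_C(t)$ in its unique canonical factorized form, interpret the exponents through the entries $m_{\sigma\delta}$ of $-(E_\sigma\circ E_\delta)^{-1}$, and argue that these data determine $(\pi',E_{\sigma_0},\ell)$ up to graph symmetry), but the decisive step is missing. You defer the injectivity of the reconstruction to ``a finite verification based on the explicit inverse Cartan matrices'', and this is not a finite check: the pre-resolution $\pi'$ may involve arbitrarily many blow-ups, so $\sigma_0$ ranges over an infinite family of positions (the vertices created on subdivided edges) and $\ell$ over all positive integers. What the paper actually proves---and what your proposal omits entirely---is the case-by-case analysis for $\A_k$, $\D_k$, $\E_6$, $\E_7$, $\E_8$: identifying the smallest denominator exponents $m_1\le m_2\le m_3$ with $\ell m_{\sigma_0 j}$ for the ends $j$ of the minimal graph of $(S,0)$, separating the possible locations of $\sigma_0$ by explicit inequalities, recovering $\sigma_0$ from the strict monotonicity of ratios such as $m_2/m_1$ or $m_3/m_1$ as $\sigma_0$ moves along a tail, and then recovering $\ell$ from $m_1=\ell m_{\sigma_0 1}$ (with separate treatment of the degenerate cases $q=1,2$ caused by cancellations, e.g.\ curvettes at ends, and with different, non-strict inequalities in the divisorial case). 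This analysis is the entire content of the lemma, so asserting its ``upshot'' without carrying it out leaves the proof essentially unproved.

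Two further inaccuracies undermine the reconstruction as you describe it. First, the part of the minimal embedded resolution graph beyond $\pi'$ is not in general a chain $E_{\sigma_0}-w_\ell-\cdots-w_1$ with the arrow at $w_\ell$: the strict transform of $C$ meets $E_{\sigma_0}$ at a smooth point of $\DD'$ with multiplicity $\ell$, but it may itself be singular there (e.g.\ a cusp tangent to $E_{\sigma_0}$), so the new portion can contain additional rupture vertices and dead ends, producing extra poles and zeros that your third step cannot separate from the exponents $\ell m_{\sigma_0 j}$; bounding and ordering these extra exponents is exactly where the paper's inequalities do their work. Second, the lemma also covers a single divisorial valuation (with $C$ a curvette), where the component defining $v$ contributes a denominator factor of its own and the relevant inequalities become non-strict; your argument addresses only the curve case. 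For the same reason, after cancellation one cannot simply declare the poles to be ``the leaf exponents'': the paper's normalization of the product and its explicit handling of the $q=2$ possibilities are needed before any exponent can be matched to a vertex.
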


\begin{proof}
 The proof is based on the analysis of the matrix $(m_{\sigma\delta})$ which has to be made separately for
 different cases. In all the cases, let us write the Poincar\'e series $P^W_C(t)$ in the form
 \begin{equation}\label{eq:curve_binomial_product}
  \prod_{i=1}^q(1-t^{m_i})^{-1}\prod_{m> 0}(1-t^{m})^{s_m},
 \end{equation}
 where $m_1\le m_2\le \ldots\le m_q$ (thus the first product may have repeated factors) and, in the second product,
 the (integer) exponents $s_m$ are non-negative and are equal to zero for $m=m_i$, $i=1,\ldots,q$. Let us recall
 that the representation of the Poincar\'e series in this form is unique.

\medskip
\noindent{\bf Case of $\A_k$ singularity.}
 Let $(S,0)$ be the singularity of type $\A_k$.
The minimal resolution graph is shown on Fig.~\ref{fig:A_k}.
 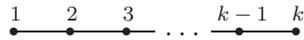
\begin{figure}[h]
 $$
\unitlength=0.50mm
\begin{picture}(80.00,40.00)(0,10)
\thinlines
\put(-15,30){\line(1,0){37.5}}
\put(37.5,30){\line(1,0){22.5}}
\put(25,29){\ldots}
\put(-15,30){\circle*{2}}
\put(0,30){\circle*{2}}
\put(15,30){\circle*{2}}
\put(45,30){\circle*{2}}
\put(60,30){\circle*{2}}
\put(-16,33){{\scriptsize$1$}}
\put(-1,33){{\scriptsize$2$}}
\put(14,33){{\scriptsize$3$}}
\put(39,33){{\scriptsize$k-1$}}
\put(59,33){{\scriptsize$k$}}
\end{picture}
$$
\caption{The dual resolution graph of the $\A_k$-singularity.}
\label{fig:A_k}
\end{figure}

 The matrix $(m_{ij})$ is
  \begin{equation*}\label{eq:inverse_for_A_k}
 \frac{1}{k+1}\cdot
 \begin{pmatrix}
  k & k-1 & k-2 & \cdots & i & \cdots & 2 & 1\\
  k-1 & 2(k-1) & 2(k-2) & \cdots & 2i & \cdots & 4 & 2\\
  k-2 & 2(k-2) & 3(k-2) & \cdots & 3i & \cdots & 6 & 3\\
  \vdots & \vdots & \vdots & \ddots & \vdots & \vdots & \vdots & \vdots\\
  i & 2i & 3i & \cdots & i(k-i+1) & \cdots & 2(k-i+1) & k-i+1\\
  \vdots & \vdots & \vdots & \vdots & \vdots & \ddots & \vdots & \vdots\\
  1 & 2 & 3 & \cdots & k-i+1 & \cdots & k-1 & k
 \end{pmatrix}.
 \end{equation*}

 To identify the components of the exceptional divisor $\DD'$, let us mark them by the indices $\sigma$ being
 rational numbers inbetween $1$ and $k$, naming the component created by the blow-up of the intersection
 point of the components $E_{\sigma_1}$ and $E_{\sigma_2}$ by $E_{\frac{\sigma_1+\sigma_2}{2}}$. (This methods
 can be applied to other rational double points under some restrictions.)
 Since we have to find $\pi'$ and $E_{\sigma_0}$ up to the symmetry of the graph in Fig.~\ref{fig:A_k},
 we can assume that $\sigma_0\ge \frac{k+1}{2}$. We shall consider the following two cases:
 \begin{enumerate}
 \item[1)] $\sigma_0=k$;
 \item[2)] $\frac{k+1}{2}\le\sigma_0<k$.
 \end{enumerate}
 Let us consider Case 1.
 For the curve case either $q=1$ or $\frac{m_2}{m_1}>k$ (the first option takes place if $C$ is a curvette at $E_k$). For the divisorial case one has $\frac{m_2}{m_1}>k$.\newline
 In Case~2 one has $q\ge 2$, $m_1=\ell m_{\sigma_0 1}$, $m_2=\ell m_{\sigma_0 k}$ and therefore $\frac{m_2}{m_1}<k$ (in contrast with
 Case~1).
 The fact that the series $P^W_{\bullet}(t)$ determines the component $E_{\sigma_0}$ follows
 from the fact that the ratio $\frac{m_2}{m_1}$ is strictly growing with $\sigma_0$.\newline
 In both cases the intersection multiplicity $\ell$ is determined by the equation $m_1=\ell m_{\sigma_0 1}$.

 \medskip
\noindent{\bf Case of $\D_k$ singularity.}
 Let $(S,0)$ be the singularity of type $\D_k$. The minimal resolution graph is shown on Fig.~\ref{fig:D_k}.
 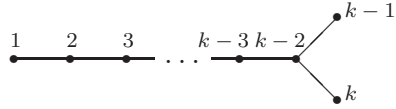
\begin{figure}[h]
 $$
\unitlength=0.50mm
\begin{picture}(80.00,40.00)(0,10)
\thinlines
\put(-15,30){\line(1,0){37.5}}
\put(-15,30){\circle*{2}}
\put(0,30){\circle*{2}}
\put(15,30){\circle*{2}}
\put(45,30){\circle*{2}}
\put(60,30){\circle*{2}}
\put(37.5,30){\line(1,0){22.5}}
\put(25,29){\ldots}
\put(60,30){\line(1,1){11}}
\put(60,30){\line(1,-1){11}}
\put(71,41){\circle*{2}}
\put(71,19){\circle*{2}}
\put(-16,33){{\scriptsize$1$}}
\put(-1,33){{\scriptsize$2$}}
\put(14,33){{\scriptsize$3$}}
\put(34,33){{\scriptsize$k-3$}}
\put(49,33){{\scriptsize$k-2$}}
\put(73,41){{\scriptsize$k-1$}}
\put(73,19){{\scriptsize$k$}}
\end{picture}
$$
\caption{The dual resolution graph of the $\D_k$-singularity.}
\label{fig:D_k}
\end{figure}

 The matrix $(m_{ij})$ is
 \begin{equation*}\label{eq:inverse_for_D_k}
 \frac{1}{4}\cdot
 \begin{pmatrix}
 4 & 4 & 4 & \cdots & 4 & 2 & 2\\
 4 & 8 & 8 & \cdots & 8 & 4 & 4\\
 4 & 8 & 12 & \cdots & 12 & 6 & 6\\
 \vdots & \vdots & \vdots & \ddots & \vdots & \vdots & \vdots\\
 4 & 8 & 12 & \cdots & 4(k-2) & 2(k-2) & 2(k-2)\\
 2 & 4 & 6 & \cdots & 2(k-2) & k & k-2 \\
 2 & 4 & 6 & \cdots & 2(k-2) & k-2 & k
 \end{pmatrix}.
 \end{equation*}

 Assume first that $k>4$. Because of the symmetry of the graph $\D_k$ we can assume that $\sigma_0$ does not
 belong to the lower right tail of the graph, that is $1\le\sigma_0\le k-1$ (we use the same numbering of newly
 created components as above for $\A_k$). We shall consider the following four cases:
 \begin{enumerate}
 \item[1)] $1<\sigma_0\le k-2$;
 \item[2)] $k-2<\sigma_0<k-1$;
 \item[3)] $\sigma_0=1$;
 \item[4)] $\sigma_0=k-1$.
 \end{enumerate}

 In Cases 1 and 2 (both in the curve and the divisorial cases) one has $q\ge 3$ and $m_1$, $m_2$, and $m_3$ are $\ell m_{\sigma_0 1}$, $\ell m_{\sigma_0 k-1}$,
 and  $\ell m_{\sigma_0 k}$ in a certain order.

 In Case 1 at least two of the exponents $m_1$, $m_2$, and $m_3$ coincide
 (and are equal to $m'$). Let us denote the third component by $m''$. We always have $\frac{m'}{m''}>\frac{1}{2}$.
 All three exponents coincide, that is $m''=m'$, if and only if $\sigma_0=2$. If $m''>m'$, then $\sigma_0<2$
 and $m_1=m_2=\ell m_{\sigma_0 k-1}$, $m_3=\ell m_{\sigma_0 1}$. If $m''<m'$, then $2<\sigma_0\le k-2$,
 $m_1=\ell m_{\sigma_0 1}$, $m_2=m_3=\ell m_{\sigma_0 k-1}$. In this case the ratio
 $\frac{m_{\sigma_0 (k-1)}}{m_{\sigma_0 1}}$ is strictly growing with $\sigma_0$ and therefore determines $\sigma_0$.

 In Case 2 the exponents $m_1$, $m_2$, and $m_3$ are different and moreover $m_1=\ell m_{\sigma_0 1}$,
 $m_2=\ell m_{\sigma_0 k}$, $m_3=\ell m_{\sigma_0 (k-1)}$, $\frac{m_3}{m_2}<\frac{5}{4}$. Again the ratio
 $\frac{m_{\sigma_0 (k-1)}}{m_{\sigma_0 1}}$ is strictly gowing and therefore determines $\sigma_0$.

 The equations above determine $\ell$.

 In Case 3 one has: in the curve case either $q=2$ with
 $m_1=m_2=2\ell$
 or $m'=m_1=m_2=2\ell$, $m''=m_3>4\ell$ with $\frac{m'}{m''}<\frac{1}{2}$;
 in the divisorial case $\frac{m'}{m''}\le\frac{1}{2}$,
 $m_1=m_2=2\ell$.

 In Case 4 one has: in the curve case either $q=2$
 with $m_1=2\ell$
 or $m_1$, $m_2$, and $m_3$ are different with $\frac{m_3}{m_2}>\frac{5}{4}$, $m_1=2\ell$;
 in the divisorial case $m_1$, $m_2$, and $m_3$ are different with $\frac{m_3}{m_2}\ge\frac{5}{4}$, $m_1=2\ell$.

 Now let $k=4$, i.~e.\ $(S,0)$ is the singularity of type $\D_4$. Because of the
symmetry of the graph,
 we can assume that $1\le\sigma_0\le 2$. We shall consider the following two case:
 \begin{enumerate}
 \item[1)] $1<\sigma_0\le 2$.
 \item[2)] $\sigma_0=1$;
 \end{enumerate}
 In Case 1 one has (both for the curve and for the divisorial cases) $m_1=m_2=\ell m_{\sigma_0,3}$, $m_3=\ell m_{\sigma_0,1}$ and
$m_3/m_1<2$.
The ratio $\frac{m_{\sigma_0 3}}{m_{\sigma_0 1}}$ is strictly growing with $\sigma_0$ and therefore determines the latter one.
In Case 2 one has: in the curve case either $q=2$
with $m_1=m_2=2\ell$
or $m_1=m_2=2\ell$ and $m_3/m_1>2$;
in the divisorial case $m_1=m_2=2\ell$ and $m_3/m_1\ge 2$.

\medskip
\noindent{\bf Case of $\E_6$ singularity.}
 Let $(S,0)$ be the singularity of type $\E_6$. The minimal resolution graph is shown on Fig.~\ref{fig:E_6}.
 \begin{figure}[h]
 $$
\unitlength=0.50mm
\begin{picture}(80.00,40.00)(0,10)
\thinlines
\put(-15,30){\line(1,0){60}}
\put(-15,30){\circle*{2}}
\put(0,30){\circle*{2}}
\put(15,30){\circle*{2}}
\put(30,30){\circle*{2}}
\put(45,30){\circle*{2}}
\put(15,30){\line(0,-1){15}}
\put(15,15){\circle*{2}}
\put(-16,33){{\scriptsize$1$}}
\put(-1,33){{\scriptsize$2$}}
\put(14,33){{\scriptsize$3$}}
\put(29,33){{\scriptsize$5$}}
\put(44,33){{\scriptsize$6$}}
\put(17,15){{\scriptsize$4$}}
\end{picture}
$$
\caption{The dual resolution graph of the $\E_6$-singularity.}
\label{fig:E_6}
\end{figure}
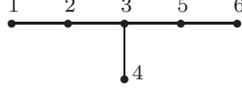

 The matrix $(m_{ij})$ is
 \begin{equation*}\label{eq:inverse_for_E_6}
 \begin{pmatrix}
 4/3&5/3&2&1&4/3&2/3\\
 5/3&10/3&4&2&8/3&4/3\\
 2&4&6&3&4&2\\
 1&2&3&2&2&1\\
 4/3&8/3&4&2&10/3&5/3\\
 2/3&4/3&2&1&5/3&4/3
 \end{pmatrix}.
 \end{equation*}
 Because of the symmetry of the graph, we can assume that $\sigma_0$ does not belong to the right tail of the graph,
 i.~e.\ (using the same rule of numbering of the components of the exceptional divisor $\DD'$ as above)
 $1\le\sigma_0\le 4$. We shall consider the following four cases:
 \begin{enumerate}
 \item[1)] $1<\sigma_0<3$;
 \item[2)] $3\le\sigma_0<4$;
 \item[3)] $\sigma_0=1$;
 \item[4)] $\sigma_0=4$.
 \end{enumerate}

In Cases 1 and 3 one has $m_1<m_2$; in Cases 2 and 4 $m_1=m_2$.

In Case 1 $\frac{m_3}{m_1}<2$. In Case 3 one has:
in the curve case either $q=2$ or $\frac{m_3}{m_2}>2$;
in the divisorial case $\frac{m_3}{m_2}\ge 2$ with $m_1=\ell$ in all the cases.

In Case 2 $\frac{m_3}{m_1}<2$. In Case 4 one has:
in the curve case either $q=2$ or $\frac{m_3}{m_1}>2$;
in the divisorial case $\frac{m_3}{m_1}\ge 2$ with $m_1=\ell$ in all the cases.

In Case 1 (both for the curve and for the divisorial valuation) one has either $\frac{m_2}{m_1}=\frac{3}{2}$ or $\frac{m_3}{m_1}=\frac{3}{2}$.
If $\frac{m_2}{m_1}=\frac{3}{2}$, then $\sigma_0\le 1.5$, the ratio $\frac{m_3}{m_1}$ is strictly increasing with
$\sigma_0$ and therefore determines $\sigma_0$; $m_1=\ell m_{\sigma_0 4}$. If $\frac{m_3}{m_1}=\frac{3}{2}$, then $\sigma_0\ge 1.5$,
the ratio $\frac{m_2}{m_1}$ is strictly decreasing with $\sigma_0$ and therefore determines $\sigma_0$; $m_1=\ell m_{\sigma_0 6}$.
In Case 2 (both for the curve and for the divisorial valuation) the ratio $\frac{m_3}{m_1}$ is strictly increasing with $\sigma_0$ and therefore determines $\sigma_0$; $m_1=\ell m_{\sigma_0 1}$.

\medskip
\noindent{\bf Case of $\E_7$ singularity.}
 Let $(S,0)$ be the singularity of type $\E_7$. The minimal resolution graph is shown on Fig.~\ref{fig:E_7}.
 \begin{figure}[h]
 $$
\unitlength=0.50mm
\begin{picture}(80.00,40.00)(0,10)
\thinlines
\put(-15,30){\line(1,0){75}}
\put(-15,30){\circle*{2}}
\put(0,30){\circle*{2}}
\put(15,30){\circle*{2}}
\put(30,30){\circle*{2}}
\put(45,30){\circle*{2}}
\put(60,30){\circle*{2}}
\put(15,30){\line(0,-1){15}}
\put(15,15){\circle*{2}}
\put(-16,33){{\scriptsize$1$}}
\put(-1,33){{\scriptsize$2$}}
\put(14,33){{\scriptsize$3$}}
\put(29,33){{\scriptsize$5$}}
\put(44,33){{\scriptsize$6$}}
\put(59,33){{\scriptsize$7$}}
\put(17,15){{\scriptsize$4$}}
\end{picture}
$$
\caption{The dual resolution graph of the $\E_7$-singularity.}
\label{fig:E_7}
\end{figure}
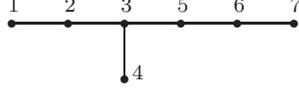

 The matrix $(m_{ij})$ is
 \begin{equation}\label{eq:inverse_for_E_7}
 \begin{pmatrix}
 2&3&4&2&3&2&1\\
 3&6&8&4&6&4&2\\
 4&8&12&6&9&6&3\\
 2&4&6&7/2&9/2&3&3/2\\
 3&6&9&9/2&15/2&5&5/2\\
 2&4&6&3&5&4&2\\
 1&2&3&3/2&5/2&2&3/2
 \end{pmatrix}.
 \end{equation}

Let us analyze first the situation when we assume that $\sigma_0\neq 7$. We shall
consider the following cases:
\begin{enumerate}
\item $\sigma_0\neq 1,4$;
\item $\sigma_0=1$;
\item $\sigma_0=4$.
\end{enumerate}

In Case 1, both for the curve and for the divisorial valuations one has
$m_3=\ell m_{\sigma_0,4}$ and $m_1$ and $m_2$ are $\ell m_{\sigma_0 1}$
and $\ell m_{\sigma_0, 7}$ in a certain order. Moreover
$m_2/m_1<2$ and $m_3/m_1 < 7/3$.

On Figure~\ref{fig:E_7-proj} the ratio $(m_{\sigma_0, 1}:m_{\sigma_0,4}:m_{\sigma_0,7})\in\R\P^2$
is shown (by the bold lines) in the affine chart $(m_{\sigma_0 ,1}/m_{\sigma_0,4}, m_{\sigma_0,7}/m_{\sigma_0,4})$.
 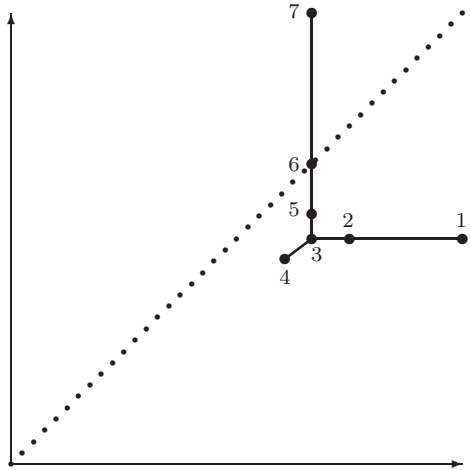
\begin{figure}[h]
 $$
\unitlength=3mm
\begin{picture}(20.00,20.00)(0,0)
\thinlines
\put(0,0){\vector(1,0){20}}
\put(0,0){\vector(0,1){20}}
\thicklines
\put(13.33,10){\line(1,0){6.67}}
\put(13.33,10){\line(0,1){10}}
\put(13.33,10){\line(-4,-3){1.2}}
\put(13.33,10){\circle*{0.5}}
\put(15,10){\circle*{0.5}}
\put(20,10){\circle*{0.5}}
\put(13.33,11.11){\circle*{0.5}}
\put(13.33,13.33){\circle*{0.5}}
\put(13.33,20){\circle*{0.5}}
\put(12.13,9.1){\circle*{0.5}}
\put(19.7,10.5){{\scriptsize 1}}
\put(14.7,10.5){{\scriptsize 2}}
\put(13.3,9){{\scriptsize 3}}
\put(11.9,8){{\scriptsize 4}}
\put(12.3,11){{\scriptsize 5}}
\put(12.3,13){{\scriptsize 6}}
\put(12.3,19.8){{\scriptsize 7}}
\multiput(0,0)(0.5,0.5){41}%
{\circle*{0.2}}
\end{picture}
$$
\caption{The points $(m_{\sigma_0, 1} : m_{\sigma_0,4} : m_{\sigma_0,7})\in \R\P^2$.}
\label{fig:E_7-proj}
\end{figure}
(The fact that the edges meeting at a vertex of valency $2$ lie on a straight line is a general feature of
pictures of this sort.)
The figure shows that the ratio $(m_{\sigma_0, 1} : m_{\sigma_0,4} : m_{\sigma_0,7})$ determines $\sigma_0$.
However the explanation above says that the W-Poincar\'e series determines this ratio
only up to the exchange of the first and the third components. The graph obtained by
exchanging $m_{\sigma_0, 1}$ and $m_{\sigma_0,7}$ is drown by thin lines. One can see
that they intersect only at a point on the diagonal
$\frac{m_{\sigma_0,1}}{m_{\sigma_0,4}} = \frac{m_{\sigma_0,7}}{m_{\sigma_0,4}}$ and
therefore the ratios
$(m_{\sigma_0, 1} : m_{\sigma_0,4} : m_{\sigma_0,7})$ determines $\sigma_0$.

In Case 2, one has $m_2/m_1=2$ (in contrast to Case 1 above and Case 3 below); $m_1=\ell$. In
Case 3 one has $m_2/m_1 =4/3$ and for the curve case,
either $q=2$ or $m_3/m_1 > 7/3$; for the divisorial case
$m_3/m_1\ge 7/3$; $m_2=2\ell$.

In the situation when we assume that $\sigma_0\neq 2$, we shall consider the
following four cases:
\begin{enumerate}
\item $\sigma_0\neq 1,4, 7$;
\item $\sigma_0=1$;
\item $\sigma_0=4$;
\item $\sigma_0=7$.
\end{enumerate}

The analysis of the Cases 1 to 3 is the same as above. In Case 4 one has
$m_2/m_1 =3/2$. This differs Case 4 from Cases 2 and 3 and in Case 1 the value
$m_2/m_1=3/2$ holds only if $\sigma_0=2$. In this case $m_1=\ell$.

\medskip

\noindent{\bf Case of $\E_8$ singularity.}
Let $(S,0)$ be the singularity of type $\E_8$. The minimal resolution
graph is shown in Figure~\ref{fig:E_8}.
 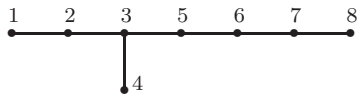
\begin{figure}[h]
 $$
\unitlength=0.50mm
\begin{picture}(80.00,40.00)(0,10)
\thinlines
\put(-15,30){\line(1,0){90}}
\put(-15,30){\circle*{2}}
\put(0,30){\circle*{2}}
\put(15,30){\circle*{2}}
\put(30,30){\circle*{2}}
\put(45,30){\circle*{2}}
\put(60,30){\circle*{2}}
\put(75,30){\circle*{2}}
\put(15,30){\line(0,-1){15}}
\put(15,15){\circle*{2}}
\put(-16,33){{\scriptsize$1$}}
\put(-1,33){{\scriptsize$2$}}
\put(14,33){{\scriptsize$3$}}
\put(29,33){{\scriptsize$5$}}
\put(44,33){{\scriptsize$6$}}
\put(59,33){{\scriptsize$7$}}
\put(74,33){{\scriptsize$8$}}
\put(17,15){{\scriptsize$4$}}
\end{picture}
$$
\caption{The dual resolution graph of the $\E_8$-singularity.}
\label{fig:E_8}
\end{figure}

The matrix $(m_{ij})$ is:
\begin{equation*}\label{eq:inverse_for_E_8}
\begin{pmatrix}
4&7&10&5&8&6&4&2\\
7&14&20&10&16&12&8&4\\
10&20&30&15&24&18&12&6 \\
5&10&15&8&12&9&6&3\\
8&16&24&12&20&15&10&5\\
6&12&18&9&15&12&8&4\\
4&8&12&6&10&8&6&3\\
2&4&6&3&5&4&3&2
\end{pmatrix}
\end{equation*}

The situation when we assume that $\sigma_0\neq 8$ was analyzed in \cite{MZ}. One can see that, in this case,
the possibility to restore $\ell$ easily follows from the discussion therein.
In the situation when we assume that $\sigma_0\neq 6$, we shall consider the following four cases:

\begin{enumerate}
\item $\sigma_0\neq 1,4, 8$;
\item $\sigma_0=1$;
\item $\sigma_0=4$;
\item $\sigma_0=8$.
\end{enumerate}

In Case 1 one has $m_2/m_1\le 2$. The way to determine $\sigma_0$ in this case is
described in \cite{MZ}; $\m_1=\ell m_{\sigma_0 8}$. In Case 2 one has $m_2/m_1 = 5/2$. In Case 3 one has
$m_2/m_1 =5/3$ and either $q=2$ (in the curve case) or $m_3/m_1\ge 8/3$.
This can be met in Case 1 when $3\le \sigma_0 < 4$ (i.e. if $\sigma_0$ is on the
lower tail of the diagram). In this case one has
$5/2 \le m_3/m_1 < 8/3$. In case 3 $m_1=3\ell$.

In Case 4 one has $m_2/m_1=3/2$. This differs Case 4 from Cases 2 and 3. In Case 1
the value $m_2/m_1 =3/2$ holds only if $\sigma_0=6$. In Case 4 $m_1=2\ell$.

\end{proof}

Let $(C,0)$ be an irreducible curve on a rational double point $(S,0)$.

\begin{proposition}\label{prop:main_curves}
Under the described assumptions, the W-Poincar\'e series $P^{W}_{C}(t)$ determines
the
combinatorial type of the minimal embedded resolution of the curve $C$ and therefore
the topological type ot the knot $C\cap L$ in $L=S\cap \S^5_{\varepsilon}$.
\end{proposition}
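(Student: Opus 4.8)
The plan is to reduce the statement, via Lemma~\ref{lemma:curve_preresolution}, to a classical fact about plane branches. By that lemma the series $P^W_C(t)$ determines, up to the symmetry of the dual graph of the minimal resolution of $(S,0)$, the pre-resolution $\pi'\colon(\XX',\DD')\to(S,0)$, the component $E_{\sigma_0}\subset\DD'$ meeting the strict transform $\w C$ of $C$ in $\XX'$, and the intersection number $\ell$ of $\w C$ and $E_{\sigma_0}$ at the point $p=\w C\cap\oE_{\sigma_0}$. If $\ell=1$ then $\pi'$ is already the minimal embedded resolution of $C$ and there is nothing left to do; so assume $\ell>1$.

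Since $p$ is a smooth point of $\DD'$ and $\XX'$ is smooth, the continuation of the resolution over $p$ is a purely local plane-curve process: choosing coordinates $(u,v)$ at $p$ with $E_{\sigma_0}=\{v=0\}$, the germ $\w C$ is a plane branch with $(\w C\cdot\{v=0\})_p=\ell$, and the dual graph of the minimal embedded resolution of $C$ over $p$ coincides with that of the minimal embedded resolution of the plane-curve pair $\w C\cup\{v=0\}$. The path of infinitely near points of $p$ shared by $\w C$ and $\{v=0\}$ has length $\nu$ determined by $\ell=\sum_{i=0}^{\nu}m_i$, where $m_0\ge m_1\ge\cdots$ is the multiplicity sequence of $\w C$ (Noether's formula for intersection multiplicities); hence $\nu$ --- and so the vertex of the new tree to which $E_{\sigma_0}$ is attached --- is determined once $\ell$ and the topological type of the plane branch $\w C$ are known. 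Thus the combinatorial type of the minimal embedded resolution of $C$ on $(S,0)$ is determined by $\pi'$, $E_{\sigma_0}$, $\ell$ and the topological type of $\w C$.

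It remains to extract the topological type of $\w C$ from $P^W_C(t)$. Writing $P^W_C(t)=\prod_{\sigma\in\Gamma}(1-t^{m_{\sigma\tau(1)}})^{-\chi(\oE_{\sigma})}$ for the minimal embedded resolution, one splits $\Gamma$ into the components already present in $\DD'$ (among which only the finitely many ends, the at most one trivalent vertex of the graph of $\pi'$, and $E_{\sigma_0}$ contribute, the remaining ones having $\chi(\oE_{\sigma})=0$) and the new components over $p$. By the pull-back formula for intersection numbers on the resolution, each exponent $m_{\sigma\tau(1)}=v_C(\text{a curvette at }E_\sigma)$ is an explicit function of the matrix $(m_{ab})$ of $(S,0)$, of $\ell$, and of the plane-curve intersection numbers of curvettes with $\w C$ at $p$; in particular, once $\pi'$, $E_{\sigma_0}$ and $\ell$ are known, the factor of $P^W_C(t)$ coming from the components of $\DD'$ is known explicitly. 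Dividing it out and performing the monomial substitution prescribed by those formulas leaves exactly the one-variable Poincar\'e series of the plane branch $\w C$, which by~\cite{Duke} is a simple transform of its Alexander polynomial; as $\w C$ is irreducible, this determines its characteristic exponents, hence its topological type. Together with the preceding paragraph this yields the combinatorial type of the minimal embedded resolution of $C$; the ambiguity ``up to a symmetry of the graph of $(S,0)$'' in Lemma~\ref{lemma:curve_preresolution} is immaterial here, since symmetry-related data produce isomorphic decorated resolution graphs (and in any case such a symmetry is realized by a self-homeomorphism of $L$), so we also obtain the topological type of the knot $C\cap L$ in $L=S\cap\S^5_{\eps}$.

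The technical heart of the argument --- and the step I expect to be the main obstacle --- is the bookkeeping just indicated: writing down the precise formulas for the exponents $m_{\sigma\tau(1)}$ that isolate the contribution of $\DD'$ from that of the germ $\w C$, and checking that after the substitution one is genuinely left with the Poincar\'e series of $\w C$ and not a degenerate twist of it that could fail to record the characteristic exponents. The $\E_7$ and $\E_8$ hypotheses are used only to invoke Lemma~\ref{lemma:curve_preresolution}: they are precisely the configurations in which the ratios of the exponents $m_{\sigma\tau(1)}$ fail to pin down $(\pi',E_{\sigma_0},\ell)$, and no further restriction is needed for the present step.
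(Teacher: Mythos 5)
Your reduction to a local plane--curve problem at the point $p=\w C\cap E_{\sigma_0}$ is reasonable in spirit, and your second paragraph (given the equisingularity type of the branch $\w C$ and the contact number $\ell$ with the smooth germ $E_{\sigma_0}$, the graph over $p$ is determined) is fine, as is the remark that the symmetry ambiguity of Lemma~\ref{lemma:curve_preresolution} is harmless for a single branch. But the step you yourself call the technical heart is where the argument breaks, and it is exactly the step the paper does \emph{not} redo from scratch: the paper's proof of Proposition~\ref{prop:main_curves} consists of the observation that, once Lemma~\ref{lemma:curve_preresolution} is available, the graph--reconstruction argument of Theorems~1 and~2 of \cite{MZ} applies verbatim. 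Concretely, write $\pi=\pi'\circ\rho$ with $\rho$ the modification over $p$, let $\nu_\sigma$ denote the plane divisorial valuation at $p$ attached to a new component $E_\sigma$, let $L$ be a local equation of $E_{\sigma_0}$ at $p$, and let $(m'_{\alpha\beta})$ be the matrix of the pre-resolution. Since $\pi'^*C=\w C^{\XX'}+\ell\sum_{\delta}m'_{\delta\sigma_0}E_\delta$ and only $E_{\sigma_0}$ acquires new exceptional components under $\rho^*$, one gets for every new $\sigma$
\begin{equation*}
m_{\sigma\tau(1)}=\nu_\sigma\bigl(\w C\bigr)+\ell\,m'_{\sigma_0\sigma_0}\,\nu_\sigma(L)\,,
\end{equation*}
so after dividing out the (indeed computable) factor coming from the components of $\DD'$, what remains is
\begin{equation*}
\prod_{\text{new }\sigma}\Bigl(1-t^{\,\nu_\sigma(\w C)+c\,\nu_\sigma(L)}\Bigr)^{-\chi(\oE_\sigma)},\qquad c=\ell\,m'_{\sigma_0\sigma_0},
\end{equation*}
with $\chi(\oE_\sigma)$ computed for the resolution of the \emph{pair} $\w C\cup\{L=0\}$.

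This is the one-variable specialization $t_1=t$, $t_2=t^{c}$ of the two-variable Poincar\'e series of the pair $(\w C,\{L=0\})$, not ``exactly the one-variable Poincar\'e series of the plane branch $\w C$'': the two unknown families $\nu_\sigma(\w C)$ and $\nu_\sigma(L)$ both vary with $\sigma$, so no monomial substitution can disentangle them, and the Euler characteristics are those of the pair, not of $\w C$ alone (and the product runs over the resolution of the pair, which is in general larger than that of $\w C$). Consequently the appeal to \cite{Duke} and to the classification of plane branches by their Alexander polynomials does not apply; what you would need is that this weighted specialization, together with the known data $\ell$ and $c$, still determines the equisingularity type of $\w C$, and specializations of multivariable Poincar\'e series do not determine the original series in general. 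That recovery is precisely the nontrivial reconstruction carried out (component by component on the graph, using the structure of the exponents $m_{\sigma\tau(1)}$) in Theorems~1 and~2 of \cite{MZ}, which the paper invokes; without it, your proof of Proposition~\ref{prop:main_curves} is incomplete.
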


Let $v$ be a divisorial valuation on $(S,0)$.

\begin{proposition}\label{prop:main_divis}
Under the described assumptions, the W-Poincar\'e series $P^{W}_{\{v\}}(t)$
determines the
combinatorial type of the minimal resolution of the valuation $v$.
\end{proposition}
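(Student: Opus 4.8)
The plan is to deduce the proposition from Lemma~\ref{lemma:curve_preresolution} and then reconstruct the minimal resolution of the valuation $v$ from the combinatorial data that lemma produces. The scheme is the same as for Proposition~\ref{prop:main_curves} (the curve case), the only new point being the reconstruction step; in particular, the genuinely hard, case-by-case analysis of the matrices $(m_{ij})$ is already packaged into Lemma~\ref{lemma:curve_preresolution}, so what remains here is comparatively light.

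First I would apply Lemma~\ref{lemma:curve_preresolution} to $P^{W}_{\{v\}}(t)$. In the divisorial case this is the series actually treated there: it differs from the W--Poincar\'e series of the curvette $C$ associated with $v$ only by one elementary factor $(1-t^{m})^{-1}$ --- reflecting that, at the component defining $v$, the strict transform of $C$ deletes one extra point from the smooth part --- and its canonical form (\ref{eq:curve_binomial_product}) is exactly the one analyzed in the proof of the lemma (this is why, there, the ``divisorial case'' carries a larger first product and correspondingly modified constraints on the ratios $m_j/m_1$). Hence, up to the symmetry of the minimal resolution graph of $(S,0)$, the series determines the pre-resolution $\pi'$ of $v$, the component $E_{\sigma_0}$ of $\DD'$ met by the strict transform of $C$, and the intersection multiplicity $\ell$ of that strict transform with $E_{\sigma_0}$ in $\XX'$. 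The extra hypotheses on $\sigma_0$ for the $\E_7$ and $\E_8$ singularities are precisely the ones making the lemma applicable, so nothing further is needed there.

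Next I would show that the triple $(\pi', E_{\sigma_0}, \ell)$ determines the minimal resolution of $v$. By the description in Section~\ref{sec:One_valuation}, this resolution is obtained from $\pi'$ by the canonical blow-up sequence resolving $C$: one blows up the point $p_0$ at which the strict transform of $C$ meets $\oE_{\sigma_0}$ --- a point lying on $E_{\sigma_0}$ and on no other component of $\DD'$, hence combinatorially unambiguous --- and iterates, at each stage blowing up the unique point through which the strict transform of $C$ then passes, until that strict transform becomes transversal to the exceptional divisor at a smooth point of one of its components. Since $C$ is a curvette, the pair $(C, E_{\sigma_0})$ near $p_0$ is analytically equivalent to $(\{y = x^{\ell}\}, \{y = 0\})$, so the combinatorics of this sequence --- the number of blow-ups and the way each newly created vertex attaches to the graph at $E_{\sigma_0}$ --- depends only on $\ell$ and is completely determined. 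The component defining $v$ is the last one created, and it coincides with $E_{\sigma_0}$ exactly when $\ell = 1$, in which case $\pi'$ is already the minimal resolution of $v$. Thus the whole weighted dual graph of the minimal resolution of $v$ is recovered from $(\pi', E_{\sigma_0}, \ell)$, and therefore from $P^{W}_{\{v\}}(t)$, up to the symmetry of the resolution graph of $(S,0)$.

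The step still requiring care --- the only real obstacle once Lemma~\ref{lemma:curve_preresolution} is granted --- is to make the previous paragraph rigorous, i.e.\ to verify that the passage $(\pi', E_{\sigma_0}, \ell) \mapsto (\text{minimal resolution of }v)$ is a genuine function. This reduces to the uniqueness up to analytic equivalence of a smooth germ with prescribed contact $\ell$ at a generic point of a smooth curve (the normal form $y = x^{\ell}$ above), together with a check, for each rational double point, that the subsequent blow-ups remain in the region lying over $\oE_{\sigma_0}$ --- so that they involve only $E_{\sigma_0}$ itself and the newly created components, not the rest of the graph nor the blow-ups producing $\pi'$ from the minimal resolution of $(S,0)$. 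Granting this, the reconstruction is unambiguous and the proposition follows; the remaining bookkeeping --- reading off the relevant entries from the matrices $(m_{ij})$ displayed in the proof of Lemma~\ref{lemma:curve_preresolution} --- is routine.
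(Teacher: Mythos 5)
There is a genuine gap, and it is located exactly at the step you yourself flag as the ``only real obstacle'': the passage $(\pi', E_{\sigma_0}, \ell) \mapsto (\text{minimal resolution of } v)$ is \emph{not} a function, so the triple produced by Lemma~\ref{lemma:curve_preresolution} does not suffice. A divisorial valuation over a point $p_0\in E_{\sigma_0}$ is encoded by a whole sequence of infinitely near points (with free/satellite choices) above $p_0$, i.e.\ by an arbitrary plane divisorial valuation at the smooth point $p_0$ of $\XX'$, and infinitely many combinatorially distinct such sequences give the same $\ell$. Concretely, with $E=E_{\sigma_0}$: (i) blow up $p_0$ and then the intersection point of the new component with the strict transform of $E$; (ii) blow up $p_0$, then a free point of the new component not on $E$, then the satellite point where the last two exceptional components meet. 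In both cases the curvette at the final divisor meets $E$ with intersection number $\ell=2$ in $\XX'$, yet the minimal resolutions of the two valuations have different dual graphs (two added vertices versus three, attached differently). Relatedly, your normal-form claim is false: the strict transform in $\XX'$ of a curvette at a divisor created by several blow-ups need not be smooth (in example (ii) it has multiplicity $2$ at $p_0$), so the pair $(C,E_{\sigma_0})$ is not analytically equivalent to $(\{y=x^{\ell}\},\{y=0\})$, and $\ell$ alone cannot pin down the blow-up sequence.

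The missing information must come from the rest of the series $P^W_{\{v\}}(t)$, which is how the paper argues: it refers to the proof of Theorem~2 of \cite{MZ}, where, after $E_{\sigma_0}$ and $\ell$ are located, the full series is used to recover the data of the associated \emph{plane} divisorial valuation at the smooth point $p_0$ (essentially its local Poincar\'e series, each added exceptional component contributing its own factor), and then the known result that the Poincar\'e series of a plane divisorial valuation determines its dual graph (\cite{DG}, \cite{FAOM}) finishes the reconstruction, the graph being glued to $\Gamma'$ at $\sigma_0$. So your reduction to Lemma~\ref{lemma:curve_preresolution} is a correct first step, but the reconstruction step needs to exploit all the exponents of $P^W_{\{v\}}(t)$ lying over $p_0$, not just the single number $\ell$.
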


Proofs of Propositions \ref{prop:main_curves} and \ref{prop:main_divis} are literally the same as of Theorems 1 and 2 in
\cite{MZ}.

 Assume that $C_1$ and $C_2$ are two curves with
 the (known) W--Poincar\'e series $P^W_{C_i}(t)$
 such that the components $E_{\sigma_0^i}$ of the exceptional divisors $\DD'_i$ of
the pre-resolutions emerge from the parts
 (one can say ``tails'' in all the cases but $\A_k$) of the resolution graph of the
minimal resolution of $(S,0)$ exchangable by a symmetry of the graph
 acting non-trivially on them.
 (Pay attention that in this case $S$ is one of
 the singularities $\A_k$, $\D_k$, and $\E_6$.
 It cannot be a singularity of type $\E_7$ or
 $E_8$ when the series W--Poincar\'e series determines the component $E_{\sigma_0}$ of the
 pre-resolution only under additional conditions.)

 \begin{lemma}\label{lemma:two}
In the discribed situation (for $(S,0)$
of the type $\A_k$, $\D_k$, or $\E_6$), The W--Poincar\'e series $P^W_{C_i}(t)$, $i=1.2$,
alongside with the intersection number
$(C_1\circ C_2)$ determines whether the strict
transforms of the curves $C_1$ and $C_2$
intersect the same part of the graph of the minimal resolution
of the surface singularity $(S,0)$ or different ones.
 \end{lemma}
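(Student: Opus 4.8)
The plan is to reduce the statement to Lemma~\ref{lemma:curve_preresolution} together with one elementary computation of the intersection number $(C_1\circ C_2)$. By Lemma~\ref{lemma:curve_preresolution}, each series $P^W_{C_i}(t)$ determines, up to a symmetry of the dual graph $G$ of the minimal resolution of $(S,0)$, the pre-resolution of $v_{C_i}$, the component $E_{\sigma_0^i}$ met by the strict transform of $C_i$, and the intersection multiplicity $\ell_i$. By the hypothesis the parts of $G$ carrying $E_{\sigma_0^1}$ and $E_{\sigma_0^2}$ lie in a single orbit of the automorphism group of $G$; fixing one such part $I$, the two series give us a component $p\in I$ (a candidate for $E_{\sigma_0^1}$), a component $u\in I$ (a candidate for $E_{\sigma_0^2}$), and the numbers $\ell_1,\ell_2$ --- the latter exactly, since $\ell_i$ is symmetry invariant. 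What is not known is whether the true positions of $E_{\sigma_0^1}$ and $E_{\sigma_0^2}$ lie on one and the same part of $G$ or on two distinct (symmetric) ones; this is precisely what we must recover from the additional datum $(C_1\circ C_2)$.

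First I would record the formula for $(C_1\circ C_2)$. Take a resolution $\pi:\XX\to(S,0)$ of $(S,0)$ dominating the pre-resolutions of both $v_{C_1}$ and $v_{C_2}$; then the strict transform of $C_i$ on $\XX$ meets the exceptional divisor only along $E_{\sigma_0^i}$, with total multiplicity $\ell_i$, and since $\pi^*C_2=\widetilde{C_2}+\sum_\tau \ell_2\, m_{\tau\sigma_0^2}E_\tau$ one gets $(C_1\circ C_2)=(\widetilde{C_1}\circ \pi^*C_2) = (\widetilde{C_1}\circ\widetilde{C_2})+\ell_1\ell_2\, m_{\sigma_0^1\sigma_0^2}$. (Here the $m_{\sigma\delta}$ are the entries of the intersection matrix on a resolution containing all the components involved, extended to the newly created components as in the proof of Lemma~\ref{lemma:curve_preresolution}; being intersection numbers of curvettes they do not depend on the resolution.) In particular, if $\sigma_0^1\neq\sigma_0^2$ --- which holds whenever $E_{\sigma_0^1}$ and $E_{\sigma_0^2}$ sit on different parts --- the two strict transforms are disjoint and $(C_1\circ C_2)=\ell_1\ell_2\, m_{\sigma_0^1\sigma_0^2}$, whereas if $\sigma_0^1=\sigma_0^2$ one has $(C_1\circ C_2)\ge \ell_1\ell_2\, m_{\sigma_0^1\sigma_0^1}$, with equality exactly when the strict transforms leave $E_{\sigma_0^1}$ at distinct points.

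Modulo the automorphism group of $G$, the true pair $(E_{\sigma_0^1},E_{\sigma_0^2})$ lies in one of exactly two orbits: the ``same part'' orbit, containing $(p,u)$ and its images, and the ``different parts'' orbit, containing $(p,u')$ and its images, where $u'$ is the image of $u$ under a symmetry $s$ moving $I$ off itself. By the invariance of $(m_{\sigma\delta})$ under these symmetries, on the first orbit $(C_1\circ C_2)\ge \ell_1\ell_2\, m_{pu}$ (with equality when $p\ne u$), while on the second $(C_1\circ C_2)=\ell_1\ell_2\, m_{p\,u'}$. Hence the statement follows once we prove the strict inequality $m_{pu}>m_{p\,u'}$ for every admissible pair of components $p,u\in I$: then ``$E_{\sigma_0^1}$ and $E_{\sigma_0^2}$ lie on the same part'' is equivalent to $(C_1\circ C_2)\ge \ell_1\ell_2\, m_{pu}$ and ``on different parts'' to $(C_1\circ C_2)=\ell_1\ell_2\, m_{p\,u'}$, and since $P^W_{C_1}(t)$ and $P^W_{C_2}(t)$ already yield $p$, $u$, $\ell_1$, $\ell_2$, both threshold values are computable and the knowledge of $(C_1\circ C_2)$ decides between the two cases.

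The remaining inequality $m_{pu}>m_{p\,u'}$ is the only real content of the argument, and I would check it type by type on the matrices already displayed (and their extensions to the newly created components). Conceptually it says that curvettes at two components of one arm meet more than curvettes at a component of that arm and the mirror image of the other, and it follows from the monotonicity of the entries $m_{\sigma\delta}$ used repeatedly in the proof of Lemma~\ref{lemma:curve_preresolution}. For instance, for $\A_k$ one normalizes $p,u\ge \tfrac{k+1}{2}$ and computes directly
$$
m_{pu}-m_{p\,u'} = \frac{(k+1-\max(p,u))\,(2\min(p,u)-k-1)}{k+1},
$$
which is positive unless $\min(p,u)=\tfrac{k+1}{2}$, i.e. unless one of the $E_{\sigma_0^i}$ is the component fixed by the reflection --- a case excluded because the symmetry is assumed to act non-trivially on it. For $\D_k$ with $k>4$ the only admissible comparison is $m_{k-1,k-1}=\tfrac k4>\tfrac{k-2}4=m_{k-1,k}$; for $\D_4$, whose three tails form one $S_3$-orbit, $m_{ii}=1>\tfrac12=m_{ij}$ for distinct $i,j\in\{1,3,4\}$; and for $\E_6$ the comparisons $m_{11}=\tfrac43>\tfrac23=m_{16}$, $\ m_{12}=m_{21}=\tfrac53>\tfrac43=m_{15}=m_{26}$, $\ m_{22}=\tfrac{10}{3}>\tfrac83=m_{25}$ (with the analogues for the newly created components) give the claim. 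The main obstacle is thus essentially bookkeeping: performing these comparisons cleanly and, as always for the double points with symmetric resolution graph, keeping track of the fixed components that must be excluded in order for the inequality to stay strict.
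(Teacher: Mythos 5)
Your argument is correct and is essentially the paper's own proof: you use that $(C_1\circ C_2)=\ell_1\ell_2\,m_{\sigma_0^1\sigma_0^2}$ when the strict transforms meet different components (and $\ge$, with possible excess only when $\sigma_0^1=\sigma_0^2$), combined with the strict inequality that the entry $m_{\sigma_0^1\sigma_0^2}$ for two components on the same part exceeds the entry for the symmetric ``different parts'' configuration, checked from the displayed matrices. The only difference is that you spell out the pullback computation and the type-by-type inequalities (e.g.\ the explicit $\A_k$ formula) which the paper leaves as an inspection of the matrices.
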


 \begin{proof}
  If the strict transforms intersect the same part of the graph, then
  $(C_1\circ C_2)\ge \ell_1\cdot\ell_2\cdot m_{\sigma_0^1\sigma_0^2}$. (The sign $>$ may hold only if $\sigma_0^1=\sigma_0^2$.)
  If the strict transforms intersect different parts of the graph, one has
  $(C_1\circ C_2)= \ell_1\cdot\ell_2\cdot m_{\sigma_0^1\sigma_0^2}$. Moreover,
  from the matrices $(m_{\sigma\delta})$ above
  one can see that, for fixed up to symmetry
  $\sigma_0^1$ and $\sigma_0^2$, the intersection
  number $m_{\sigma_0^1\sigma_0^2}$ for the
  components
  $\sigma_0^1$ and $\sigma_0^2$ from the same part is strictly larger than the one for the
  components from different parts.
 \end{proof}

\section{The case of several valuations}
The idea of the proofs of Theorems~\ref{theo:Main_curves} and~\ref{theo:Main_divisorial} is the same as
of Theorems~3 and~4 in \cite{MZ}.
Moreover, the proof of Theorem~\ref{theo:Main_divisorial} is literally the same modulo one remark related with the symmetry
of the minimal resolution graph of the surface singularity: see at the end of the section.
This is explained by the fact that the ``projection
formula'' (the equation connecting the Weil--Poincar\'e series of a collection of valuations with
the one for the collection with one valuation excluded) is much simpler for a collection of divisorial valuations. In this case the Weil--Poincar\'e series of the smaller collection is obtained from the other one simply by putting the value 1
for the corresponding variable $t_i$. Thus the Weil--Poincar\'e series of the smaller collection is
determined by the same series for the larger one.
This is not the case, in general, for a collection
of curve valuations. In this case the projection formula includes a certain factor for the excluded
valuation which, in general, cannot be directly obtained from the initial Weil--Poincar\'e series.
(This also explains the fact that, in~\cite{MZ}, the proof for divisorial valuations is much shorter than
for curve ones.) For the curve case (Theorem~\ref{theo:Main_curves}) the proof contains
some differences. Therefore we include some parts of it.

Let $C$ be an irreducible curve
germ in the surface $(S,0)$ and let
$\pi:(\XX,\DD)\to(S,0)$ be an embedded resolution of the curve
$(C,0)\subset(S,0)$.
Let $\Gamma$ be the minimal resolution graph of $C$, i.~e.,  the dual graph of
$\pi$.
Let $\b{C}$ be the total transform of the curve $C$ in $\XX$.
One has
$$
\b{C} = \w{C} + \sum_{\sigma\in \Gamma} m_{\sigma} E_\sigma\;,
$$
where $\w{C}$ is the strict transform of the curve $C$. The rational numbers
$\{m_{\sigma} : \sigma\in \Gamma\}$ are uniquely determined by the system of linear
equations
\begin{equation}\label{eq:M}
\{ \w{C} \cdot E_{\alpha} + \sum_{\sigma\in \Gamma} m_{\sigma} \; E_{\sigma}\cdot
E_{\alpha} = 0 \ : \ E_{\alpha}\in \cal{D} \}\; .
\end{equation}
(Each equation is a consequence of the fact
$\b{C} \cdot E_{\alpha}=0$,
see~\cite[Equation
(1)]{MUM}).

Let $\rho\in \Gamma$ be an end and let $\Delta = \{\rho = \alpha_0, \alpha_1,\ldots.
\alpha_s = \sigma\}$
be the corresponding dead arc in $\Gamma$ (i.e. the minimal connected subgraph of
$\Gamma$ such that $\sigma$ is a star vertex). Then one has:

\begin{lemma}\label{lemma:0}
If $-E_{\alpha_i}^2 \neq 1$ for $i=0,\ldots, s-1$, then there exists an integer
$N>1$, independent of the branch $C$, such that
$m_{\sigma} = N m_{\rho}$.
\end{lemma}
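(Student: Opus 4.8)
The plan is to read the claim straight off the linear system~\eqref{eq:M}, restricted to the chain $\Delta\setminus\{\sigma\}=\{\alpha_0,\dots,\alpha_{s-1}\}$. Since $\Delta$ is a \emph{dead} arc, the strict transform $\w C$ meets none of the components $E_{\alpha_0},\dots,E_{\alpha_{s-1}}$ (it may meet $E_\sigma$, but this plays no role below); hence the equation of~\eqref{eq:M} at each $E_{\alpha_i}$, $0\le i\le s-1$, is \emph{homogeneous} in the multiplicities $m_\bullet$. Writing $b_i:=-E_{\alpha_i}^2$ and using that $\rho=\alpha_0$ has valency $1$ in $\Gamma$ (its only exceptional neighbour being $E_{\alpha_1}$), these equations read
\[
m_{\alpha_1}=b_0\,m_{\alpha_0},\qquad m_{\alpha_{i+1}}=b_i\,m_{\alpha_i}-m_{\alpha_{i-1}}\quad(1\le i\le s-1).
\]
First I would note that, by negative definiteness of the intersection matrix, each $b_i$ is a positive integer, and by hypothesis $b_i\neq 1$, so $b_i\ge 2$ for $i=0,\dots,s-1$.

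Next I would solve this two-term recursion. Put $N_0:=1$, $N_1:=b_0$ and $N_{i+1}:=b_iN_i-N_{i-1}$ for $1\le i\le s-1$; these are integers depending only on $b_0,\dots,b_{s-1}$, i.e.\ only on the arc $\Delta$ and not on how $C$ meets the rest of the resolution (up to sign they are the determinants of the intersection matrices of the initial sub-chains of $\Delta\setminus\{\sigma\}$, equivalently the continuants of $b_0,\dots,b_{i-1}$). An immediate induction on $i$ using the displayed relations gives $m_{\alpha_i}=N_i\,m_\rho$ for all $0\le i\le s$; in particular $m_\sigma=m_{\alpha_s}=N\,m_\rho$ with $N:=N_s$, an integer independent of the branch $C$. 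Here $s\ge1$, since an end of a tree is never a star vertex, so $N_s$ is well defined.

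It remains to prove $N>1$, and this is the one place where the hypothesis $-E_{\alpha_i}^2\neq 1$ genuinely enters; the hard part — though entirely elementary — is the monotonicity estimate. I would show by induction that the consecutive differences $N_{i+1}-N_i$ are positive and non-decreasing: indeed $N_1-N_0=b_0-1\ge 1>0$, and for $1\le i\le s-1$
\[
N_{i+1}-N_i=(b_i-1)N_i-N_{i-1}\ \geq\ N_i-N_{i-1}
\]
because $b_i\ge2$ and $N_i\ge N_{i-1}>0$ by the inductive hypothesis. Hence $(N_i)_{0\le i\le s}$ is strictly increasing, and since $s\ge1$ we conclude $N=N_s\ge N_1=b_0\ge2>1$. (Conceptually, $\Delta\setminus\{\sigma\}$ is the resolution graph of a cyclic quotient singularity and $N$ is essentially the order of its local class group, positive integer $>1$ precisely because the chain is non-empty with all weights $\le-2$; but I would give the self-contained recursive argument above.)
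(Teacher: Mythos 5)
Your argument is correct and is essentially the paper's own proof: both read off the homogeneous relations $m_{\alpha_1}=(-E_\rho^2)m_\rho$ and $m_{\alpha_{i+1}}=(-E_{\alpha_i}^2)m_{\alpha_i}-m_{\alpha_{i-1}}$ from~(\ref{eq:M}) along the dead arc and deduce $m_{\alpha_i}=N_i m_\rho$ with $N_{i+1}=(-E_{\alpha_i}^2)N_i-N_{i-1}$, concluding $N_{i+1}\ge 2N_i-N_{i-1}>N_i$ by induction since all weights are $\le -2$. Your version merely makes explicit a few points the paper leaves implicit (vanishing of $\w C\cdot E_{\alpha_i}$ on the dead arc, the continuant interpretation showing $N$ depends only on the arc), so no substantive difference.
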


\begin{proof}
Equation~(\ref{eq:M}) for $E_{\rho}$ gives
$m_{\alpha_1} = (-E_{\rho}^2)m_{\rho} = N_1 m_{\rho}$ with an integer $N_1>1$.
Again, the same equation for $E_{\alpha_i}$ gives
\begin{align*}
m_{\alpha_{i+1}} & = (-E_{\alpha_i}^2) m_{\alpha_i}- m_{\alpha_{i-1}}
 = -E_{\alpha_{i}}^2 N_i m_{\rho} -  N_{i-1}m_{\rho}
=\\
& = (-E_{\alpha_{i}}^2 N_i -  N_{i-1}) m_{\rho} =
N_{i+1} m_{\rho}\; .
\end{align*}
Since  $E_{\alpha_{i}}^2\neq -1$, one has
$N_{i+1}\ge 2 N_{i}-N_{i-1} > N_{i}$ provided we assume
$N_i> N_{i-1}$ by induction.
\end{proof}

\medskip

Let $C=\bigcup_{i=1}^r C_i$ be a reducible (that is, $r>1$) curve germ in the
surface $(S,0)$ and let
$\pi:(\XX,\DD)\to(S,0)$ be the minimal embedded resolution of the curve
$(C,0)\subset(S,0)$.
Let $\Gamma$ be the minimal resolution graph of $C$, i.~e.,  the dual graph of
$\pi$. Let $\tau(i)$ be
the vertex of $\Gamma$ such that the component $E_{\tau(i)}$ of the exceptional
divisor $\DD$ intersects
the strict transform $\widetilde{C}_i$ of the curve $C_i$ and let $m_{\sigma}^i:=m_{\sigma\tau(i)}$.
One has
$\mm_{\sigma}=(m_{\sigma}^1, \ldots, m_{\sigma}^r)$. The reason (somewhat
psychological) for that is the fact that,
for a multi-exponent of a term of the Poincar\'e series $P_C(\seq t1r)$ or of a
factor of its decomposition, one
knows its components $m_{\sigma}^1$, \dots, $m_{\sigma}^r$, but does not know the
vertex $\sigma$.
One can say that our aim is to find vertices $\tau(i)$ corresponding to the curve.

\medskip

Let $\b{C}_k$ ($k=1,\ldots,r$) be the total transform of the curve $C_k$ in $\XX$.
One has
$$
\b{C}_k = \w{C}_k + \sum_{\sigma\in \Gamma} m_{\sigma}^k E_\sigma\;,
$$
where $\w{C}_k$ is the strict transform of the curve $C_k$.

Let us fix a pair of branches $C_i$ and $C_j$ and let $q : \Gamma\to \Q$ be the
function defined by
$q(\alpha) = m_{\alpha}^j/m_{\alpha}^i$ for $\alpha\in \Gamma$.
One has the following statement.

\begin{lemma}\label{lemma:4}
Let $E_{\alpha}$ be a component of the exceptional divisor  $\DD$ such that
$\w{C}_i\cdot E_{\alpha}=0$ and let $\{\seq {\rho}1s\}\subset \Gamma$
be the set of all vertices connected by an edge with $\alpha$.
Let us assume that either $\w{C}_j$ intersects $E_{\alpha}$ or
there exists $\rho_{i_0}$ such that $q(\rho_{i_0}) > q(\alpha)$. Then
there exists $\rho_k$ such that $q(\alpha)> q(\rho_k)$.
\end{lemma}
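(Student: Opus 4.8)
The plan is to localize the defining linear system~(\ref{eq:M}) at the vertex $\alpha$ for both branches $C_i$ and $C_j$, and then run a discrete maximum-principle argument on the ratio function $q$.

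First I would write out the equation of~(\ref{eq:M}) attached to the component $E_\alpha$. Since $\{\seq{\rho}{1}{s}\}$ is by hypothesis exactly the set of vertices joined to $\alpha$ by an edge, one has $E_\alpha\cdot E_{\rho_l}=1$ for each $l$, $E_\alpha\cdot E_\sigma=0$ for every other $\sigma\neq\alpha$, and $E_\alpha^2=-a$ with an integer $a\ge 1$. Hence, for $k\in\{i,j\}$,
\[
\w{C}_k\cdot E_\alpha \;=\; a\,m_\alpha^k-\sum_{l=1}^s m_{\rho_l}^k .
\]
Plugging in the hypothesis $\w{C}_i\cdot E_\alpha=0$ gives $a\,m_\alpha^i=\sum_l m_{\rho_l}^i$, and since $\w{C}_j\cdot E_\alpha\ge 0$ (with strict inequality exactly when $\w{C}_j$ meets $E_\alpha$) one gets $a\,m_\alpha^j\ge\sum_l m_{\rho_l}^j$. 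All the entries $m_\sigma^k=m_{\sigma\tau(k)}$ are strictly positive, so $q$ is everywhere defined and the manipulations below are legitimate.

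Then I would argue by contradiction, supposing the conclusion fails, i.e.\ $q(\rho_l)\ge q(\alpha)$ for every $l$. Multiplying the resulting inequality $m_{\rho_l}^j\ge q(\alpha)\,m_{\rho_l}^i$ (legitimate since $m_{\rho_l}^i>0$) and summing over $l$ gives
\[
\sum_{l=1}^s m_{\rho_l}^j \;\ge\; q(\alpha)\sum_{l=1}^s m_{\rho_l}^i \;=\; q(\alpha)\,a\,m_\alpha^i \;=\; a\,m_\alpha^j .
\]
Comparing with $a\,m_\alpha^j\ge\sum_l m_{\rho_l}^j$ forces equality throughout: first $\w{C}_j\cdot E_\alpha=0$, and then $\sum_l\bigl(q(\rho_l)-q(\alpha)\bigr)m_{\rho_l}^i=0$ with all summands nonnegative, whence $q(\rho_l)=q(\alpha)$ for every $l$. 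But these two conclusions together contradict the hypothesis, which asserts that $\w{C}_j$ meets $E_\alpha$ or that some neighbour satisfies $q(\rho_{i_0})>q(\alpha)$. Hence there must be a neighbour $\rho_k$ with $q(\alpha)>q(\rho_k)$.

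I do not anticipate a genuine obstacle: the only delicate points are getting the intersection-matrix conventions right — so that in~(\ref{eq:M}) the coefficient of $m_\alpha$ is $-a<0$ and that of each $m_{\rho_l}$ is $+1$ — and invoking the positivity of the multiplicities $m_{\sigma\tau(k)}$ recorded earlier. It is perhaps worth remarking that the argument uses only $a\ge 1$, so the statement is insensitive to the possible presence of $(-1)$-curves in the minimal embedded resolution of $C$.
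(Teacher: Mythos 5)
Your proof is correct and is essentially the paper's own argument: both apply the linear system~(\ref{eq:M}) at $E_\alpha$ to the branches $C_i$ and $C_j$, use positivity of the multiplicities to compare $\sum_l m^j_{\rho_l}$ with $q(\alpha)\sum_l m^i_{\rho_l}$ under the assumption $q(\rho_l)\ge q(\alpha)$ for all $l$, and derive a contradiction (the paper phrases it as the chain $0=\w{C}_j\cdot E_\alpha+m^j_\alpha E_\alpha^2+\sum_l m^j_{\rho_l}\ge \w{C}_j\cdot E_\alpha\ge 0$, strict under either hypothesis). No gap; the reorganization into ``equality is forced everywhere'' is only cosmetic.
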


\begin{proof}
Assume that $q(\rho_k)\ge q(\alpha)$ for any $k=1,\ldots, s$. Applying (\ref{eq:M}) to
$C_j$ and $C_i$ one gets:
\begin{align*}
0 &= \w{C}_j \cdot E_{\alpha} + m_{\alpha}^j E_{\alpha}^2 + \sum_{k=1}^s m_{\rho_k}^j
\ge
\\
&\ge  \w{C}_j \cdot E_{\alpha} + m_{\alpha}^j E_{\alpha}^2 + \sum_{k=1}^s
q(\alpha) m_{\rho_k}^i = \\
& =  \w{C}_j \cdot E_{\alpha} + q(\alpha) ( m_{\alpha}^i E_{\alpha}^2 + \sum_{k=1}^s
m_{\rho_k}^i) = {\widetilde{C}}_j \cdot E_{\alpha} \ge 0
\end{align*}
The inequality is strict if $\w{C}_j \cdot E_{\alpha}>0$ or if
there exists $i_0$ such that $q(\rho_{i_0})>q(\alpha)$. This implies the statement.
\end{proof}

Let $[\tau(j), \tau(i)]\subset \Gamma$ be the (oriented) geodesic from
$\tau(j)$ to $\tau(i)$ and let
$\{\Delta_p\}$, $p\in\Pi$, be the connected components of
$\Gamma\setminus [\tau(j),\tau(i)]$. For each $p\in\Pi$ there exists a
unique $\rho_p\in [\tau(j),\tau(i)]$ connecting $\Delta_p$ with
$[\tau(j),\tau(i)]$, i.~e., such that
$\Delta^*_{p} = \Delta_p\cup \{\rho_p\}$ is connected.

\begin{proposition}\label{prop:P1}
With the previous notations,
one has:
\begin{enumerate}
\item The function $q$ is strictly decreasing along the geodesic
$[\tau(j), \tau(i)]$.
\item For each $p\in\Pi$, the function $q$ is constant on $\Delta^*_p$.
\end{enumerate}
\end{proposition}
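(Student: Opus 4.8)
The plan is to prove both parts of Proposition~\ref{prop:P1} simultaneously by exploiting Lemma~\ref{lemma:4}, applied to the curve $\widetilde{C}_i$ which (by minimality of the embedded resolution and the normal crossing condition) meets only the single component $E_{\tau(i)}$, so that $\widetilde{C}_i\cdot E_\alpha=0$ for every $\alpha\neq\tau(i)$. First I would observe that the strict transform $\widetilde{C}_j$ meets only $E_{\tau(j)}$, which lies on the geodesic $[\tau(j),\tau(i)]$; hence at any vertex $\alpha$ with $\alpha\neq\tau(i)$ and $\alpha\neq\tau(j)$ we are in the situation of Lemma~\ref{lemma:4} with $\widetilde{C}_j\cdot E_\alpha=0$, so the lemma gives: if some neighbour $\rho$ of $\alpha$ has $q(\rho)>q(\alpha)$, then some other neighbour $\rho'$ has $q(\alpha)>q(\rho')$. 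This is a "no strict local minimum / no strict local maximum'' statement for $q$ away from $\tau(i),\tau(j)$, which on a tree forces monotonic behaviour once one knows the boundary behaviour at $\tau(i)$ and $\tau(j)$.

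The next step is to establish the boundary behaviour. At $\tau(j)$ I would apply (\ref{eq:M}) for $C_j$ and for $C_i$ at $E_{\tau(j)}$: since $\widetilde{C}_j\cdot E_{\tau(j)}=1>0$ while $\widetilde{C}_i\cdot E_{\tau(j)}=0$, the computation in the proof of Lemma~\ref{lemma:4} (with the roles arranged so the inequality is strict) shows that $\tau(j)$ cannot be a local minimum of $q$ among its neighbours — equivalently $q$ strictly decreases as one steps off $\tau(j)$ toward $\tau(i)$; and symmetrically, running the same argument with $i$ and $j$ interchanged, $q$ strictly decreases as one approaches $\tau(i)$ (equivalently $1/q$ strictly decreases approaching $\tau(j)$). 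Combining this with the local statement: walk along the geodesic $[\tau(j),\tau(i)] = \{\tau(j)=\beta_0,\beta_1,\ldots,\beta_\ell=\tau(i)\}$. Starting at $\beta_0$ we have $q(\beta_0)>q(\beta_1)$. Inductively, at an interior vertex $\beta_k$ we have a neighbour $\beta_{k-1}$ with $q(\beta_{k-1})>q(\beta_k)$, so Lemma~\ref{lemma:4} (applicable since $\beta_k\neq\tau(i),\tau(j)$) produces a neighbour $\rho'$ of $\beta_k$ with $q(\beta_k)>q(\rho')$; I would then argue that $\rho'$ must be $\beta_{k+1}$ — if $\rho'$ lay in one of the hanging components $\Delta_p$ attached at $\beta_k$, then iterating Lemma~\ref{lemma:4} inside $\Delta_p$ (whose vertices are all $\neq\tau(i),\tau(j)$ and are all missed by both strict transforms) would force $q$ to keep strictly decreasing out to an end of $\Gamma$, and at an end $\rho$ equation (\ref{eq:M}) reads $m^k_{\rho}(-E_\rho^2) = m^k_{\rho'}$ for the unique neighbour $\rho'$, i.e. $q(\rho)=q(\rho')$ (since the single coefficient $-E_\rho^2$ cancels in the ratio) — contradicting strict decrease. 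Hence $\rho'=\beta_{k+1}$ and part~(1) follows by induction.

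For part~(2): fix a hanging component $\Delta_p$ attached to the geodesic at $\rho_p$, and let $\alpha\in\Delta_p$. Every vertex of $\Delta^*_p$ other than $\rho_p$ is missed by both strict transforms and is $\neq\tau(i),\tau(j)$. If $q$ were non-constant on $\Delta^*_p$, pick a vertex $\alpha\in\Delta_p$ where $q$ attains a value different from $q(\rho_p)$, chosen farthest from $\rho_p$ among such; then inside the subtree $\Delta_p$ the vertex $\alpha$ is either a leaf of $\Gamma$ — handled by the end-vertex identity $q(\alpha)=q(\text{neighbour})$ above, a contradiction — or all neighbours of $\alpha$ inside $\Delta_p$ carry the value $q(\rho_p)$, so $\alpha$ is a strict local extremum of $q$ among its neighbours unless its neighbour toward $\rho_p$ breaks the tie; pushing this toward $\rho_p$ and invoking Lemma~\ref{lemma:4} at each step (it forbids a strict local max when there is a strictly larger neighbour, and dually a strict local min) forces $q\equiv q(\rho_p)$ on all of $\Delta^*_p$. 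I would phrase this cleanly as: Lemma~\ref{lemma:4} says $q$ has no strict local maximum and (applying it to $1/q$, i.e. swapping $i$ and $j$) no strict local minimum at any vertex off $\{\tau(i),\tau(j)\}$ that is missed by both strict transforms; on the finite tree $\Delta^*_p$ with boundary value $q(\rho_p)$ this forces $q$ constant.

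\textbf{Main obstacle.} The delicate point is the bookkeeping in the "no strict local maximum/minimum'' dichotomy: Lemma~\ref{lemma:4} is not symmetric in $q$ and $1/q$ on its face, so I must carefully check that swapping the roles of $C_i$ and $C_j$ (hence replacing $q$ by $1/q$ and the hypothesis "$\widetilde{C}_i\cdot E_\alpha=0$'' by "$\widetilde{C}_j\cdot E_\alpha=0$'') is legitimate exactly at the vertices in question — which it is, since at interior geodesic vertices and at hanging-tree vertices both strict transforms are absent — and that the end-vertex identity $q(\rho)=q(\rho')$ genuinely holds (it does, because at an end the linear equation (\ref{eq:M}) has only two terms and the common factor $-E_\rho^2$ cancels in the ratio, even though $-E_\rho^2$ need not equal $1$). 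Getting the induction along the geodesic airtight — ruling out that the strictly-smaller neighbour produced by Lemma~\ref{lemma:4} escapes into a side branch — is where the end-vertex computation does the real work and deserves the most care.
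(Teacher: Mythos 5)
Your part (1) is, in substance, the paper's own argument: iterate Lemma~\ref{lemma:4} to build a strictly decreasing chain of adjacent vertices, note that at a deadend $\rho\neq\tau(i),\tau(j)$ equation~(\ref{eq:M}) has only two terms so $q(\rho)=q(\rho')$ for the unique neighbour $\rho'$, and use this to show that the strictly smaller neighbour produced at each geodesic vertex cannot lie in a hanging branch (a strictly decreasing self-avoiding path trapped in the finite branch could only die at a deadend, which the two-term identity forbids), hence must be the next geodesic vertex. Two small wrinkles there: the base inequality $q(\tau(j))>q(\beta_1)$ does not follow from ``$\tau(j)$ is not a local minimum'' alone — you need the same branch-exclusion argument at $\tau(j)$ itself (it applies verbatim, since $\beta_1$ is the only geodesic neighbour of $\tau(j)$); and your labels are interchanged: Lemma~\ref{lemma:4} as stated forbids a strict local \emph{minimum} of $q$ (its $i\leftrightarrow j$ swap forbids a strict local maximum). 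Both versions are indeed available at vertices missed by both strict transforms, so this is harmless.

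Part (2), however, has a genuine gap as written. The ``clean'' formulation — no strict local maximum and no strict local minimum at the vertices of $\Delta_p$ forces $q$ to be constant on $\Delta_p^*$ — is false: on a chain $\rho_p - a - b - c$ with values $0,1,2,2$ the function has no strict local extremum at $a,b,c$, yet it is not constant. What Lemma~\ref{lemma:4} actually gives is stronger, and that is what must be used: at a vertex missed by both strict transforms, a strictly larger neighbour forces a strictly smaller one, and (swapping $i$ and $j$) a strictly smaller neighbour forces a strictly larger one. With this, constancy does follow, e.g.\ by the paper's route: first, for an edge $(\rho_p,\beta)$ with $\beta\in\Delta_p$, a maximal decreasing (resp.\ increasing) path starting across it is trapped in $\Delta_p$ (it cannot revisit $\rho_p$ since $q$ is strictly monotone along it), can never stop at a non-deadend (the lemma forces a continuation) and cannot stop at a deadend (the two-term identity), so $q(\rho_p)=q(\beta)$; then a strict inequality across an edge inside $\Delta_p$ would again produce a trapped path, because exiting through $\rho_p$ would require a strict inequality across an edge already shown to be an equality. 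Your alternative ``farthest vertex'' argument also does not close: for a leaf $\alpha$ chosen farthest from $\rho_p$ among vertices with $q(\alpha)\neq q(\rho_p)$, the identity $q(\alpha)=q(\mbox{neighbour})$ is not by itself a contradiction, since that neighbour lies closer to $\rho_p$ and need not carry the value $q(\rho_p)$. All the needed ingredients are in your write-up, but the concluding step of part (2) does not follow as stated and needs the trapped-path (or an equivalent maximum/minimum propagation) argument spelled out.
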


\begin{proof}
Let $\alpha$ and $\beta$ be two vertices of $\Gamma$ connected by an edge and let
$q(\alpha)>q(\beta)$. Lemma~\ref{lemma:4} permits to construct a maximal
sequence $\alpha_0, \alpha_1, \ldots, \alpha_k$ of consecutive vertices starting with $\alpha$ and
$\beta$ (i.~e., $\alpha_0 = \alpha$, $\alpha_1=\beta$) such that $q(\alpha_i)>q(\alpha_{i+1})$.
(We will call a sequence of this sort {\em a decreasing path}. If the inequality is in the other direction,
the path will be called {\em increasing}.)
The maximality means that either $\alpha_k$ is a deadend of $\Gamma$ or $\w{C}_i\cdot E_{\alpha_k}\neq 0$.
If $\alpha_k$ is a deadend, $\alpha_{k-1}$ is the only vertex connected with $\alpha_k$ and
Lemma~\ref{lemma:4} implies that $q(\alpha_k)=q(\alpha_{k-1})$. Therefore the
constructed path finishes by the vertex $\alpha_k=\tau(i)$.
Note that, if $\alpha\in [\tau(j), \tau(i)]$ and $\beta\notin [\tau(j),\tau(i)]$, the end of
a maximal decreasing (or increasing) path has to finish at a deadend and therefore $q(\alpha)=q(\beta)$.
In particular, this implies that
the function $q$ is constant on each connected set $\Delta^*_p$.

Assume that $\tau(i)\neq \tau(j)$. Lemma~\ref{lemma:4} implies that there exists
a vertex $\alpha_1$ connected with $\tau(j)$ such that
$q(\tau(j)) > q(\alpha_1)$. Therefore the maximal decreasing path starting with
$\tau(j)$ and $\alpha_1$ coincides with the geodesic $[\tau(j), \tau(i)]$.
\end{proof}

\begin{remark}
Let $\rho\in \Gamma$ be an end and let $\Delta = \{\rho = \alpha_0, \alpha_1,\ldots.
\alpha_s = \sigma\}$  be the corresponding dead arc in $\Gamma$. In this case
Proposition~\ref{prop:P1} implies that the ratio $m^{i}_{\alpha}/m^j_{\alpha}$ is
constant for $\alpha\in \Delta$ and for any pair $i,j\in  \{1,\ldots,r\}$.
In fact, from Lemma~\ref{lemma:0}, one can easily deduce that
$\mm_{\sigma} = N \mm_{\rho}$ for an integer $N>1$, in particular,
$\mm_{\sigma} > \mm_{\rho}$.
\end{remark}


\begin{Proof} of Theorem~\ref{theo:Main_curves}.
We have to show that the Weil-Poincar\'e series $P^{W}_{\{C_i\}}(\tt)$ determines the
minimal
resolution graph $\Gamma$ of $C$. In the case under consideration one has a
projection formula different of the one for divisorial valuations.

Let $i_0\in \{1,\ldots,r\}$. The A'Campo type formula~(\ref{eq:def_W_Poincare}) for
$P^{W}_{\{C_i\}}(\tt)$ implies that
\begin{equation}
\label{eq:E1}
P^W_{\{C_i\}}(\tt)_{|_{t_{i_0}=1}} = P^W_{C\setminus \{C_{i_0}\}}(t_1, \ldots,
t_{i_0-1},
t_{i_0 +1}, \ldots, t_r) \cdot (1-\tt^{\mm_{\tau(i_0)}})_{|_{t_{i_0}=1}} \; .
\end{equation}
Applying~(\ref{eq:E1}) several times one gets
\begin{equation}
\label{eq:E2}
P^W_C(\tt)_{\vert_{t_j=1\text{ for }j\ne i_0}} = P^W_{C_{i_0}}(t_{i_0}) \cdot
\prod_{i\neq i_0}
(1-t_{i_0}^{m_{\tau(i)}^{i_0}}) \; .
\end{equation}
Pay attention to the fact that
$m_{\tau(i)}^{i_0} = m_{\tau(i_0)}^{i}$ and therefore the series
$P^W_{C_{i_0}}(t_{i_0})$ can be determined from the Weil-Poincar\'e series $P_C(\tt)$
if one knows the multiplicity
$\mm_{\tau(i_0)}$. The strategy of the proof follows the steps from~\cite{FAOM}
(see also~\cite{MNach}):
\begin{enumerate}
\item[1)] To detect an index $i_0$ for which one can find the corresponding
multiplicity $\mm_{\tau(i_0)}$
from the A'Campo type formula for $P^W_C(\tt)$. Then Theorem 2 and
equation~(\ref{eq:E2}) permit to recover the
minimal resolution graph $\Gamma_{i_0}$ of the curve
$C_{i_0}$. Equation~(\ref{eq:E1}) gives the possibility to compute the Poincar\'e series
$P^W_{C\setminus \{C_{i_0}\}}(t_1, \ldots, t_{i_0-1}, t_{i_0 +1}, \ldots, t_r)$ of
the curve $C\setminus \{C_{i_0}\}$.
By induction one can assume that the resolution graph $\Gamma^{i_0}$ of the curve
$C\setminus \{C_{i_0}\}$ is known.
Moreover, Lemma~\ref{lemma:two} implies that,
for each $j$, the
multiplicity $\mm_{\tau(i_0)}$ determines
whether the vertices of the minimal resolution
graph coresponding to the curves $C_{i_0}$ and $C_j$ are on the same part from those exchanged by symmetries or on different ones.

\item[2)] To determine the separation vertex of the curves $C_{i_0}$ and $C_j$ for $j\neq i_0$ in order
to join the graphs $\Gamma_{i_0}$ and $\Gamma^{i_0}$ to obtain the resolution graph $\Gamma$.
\end{enumerate}

The second step literally repeats the same one in the proof of Theorem~3 in~\cite{MZ}. Therefore
we omit an analysis of it here.

\medskip

Proposition~\ref{prop:P1} implies that, for any fixed $i_0$ and
for any $j\neq i_0$ and $\sigma\in \Gamma$, one has
$m_{\sigma}^j/m_{\sigma}^{i_0} \ge m_{\tau(i_0)}^j/m_{\tau(i_0)}^{i_0}$. Therefore one has
$$
\frac {1}{m_{\sigma}^{i_0}} \mm_{\sigma} \ge
\frac {1}{m_{\tau(i_0)}^{i_0}} \mm_{\tau(i_0)} \; .
$$

Let $P^W_C(\tt) = \prod_{k=1}^{p} (1-\tt^{\nn_k})^{s_k}$ be the Weil-Poincar\'e
series of the curve
$C$, where $s_k\neq 0$ for all $k$.
Note that the only case in which $P^W_C(\tt)=1$ corresponds to the singularity
$\A_k$ and two branches $C_1$, $C_2$ in such a way that they are curvettes at the end
points named $1$ and $k$
of the dual graph of $\A_k$. In the sequel we omit this trivial situation.
For $i\in I=\{1,\ldots,r\}$ let
$\kappa: I \to \{1,\ldots, p\}$ be the map defined by
$k=\kappa(i)$ be such that $s_k>0$ and
\begin{equation}\label{eq*}
\frac{1}{n_{j}^{i}} \nn_{j} \ge
\frac{1}{n_{k}^{i}} \nn_{k}
\end{equation}
for all $j$.
Note that if the inequalities~(\ref{eq*}) hold for
$\nn_k= \mm_{\rho}$, $\rho$ a deadend of $\Gamma$, then (see
Proposition~\ref{prop:P1})
one has the same condition for $\sigma$, the star vertex of $\Gamma$ more
close to $\rho$.
Let $E = \kappa (I) \subset \{1,\ldots, p\}$ be the set of indices $k$ such that
$k=\kappa(i)$ for some $i\in \{1,\ldots, r\}$ and for $k\in E$ let $A(k)\subset
\{1,\ldots, r\}$ denote the set of indices $i$ such that $k=\kappa(i)$.
Note that
$A(k)$ contains all the indices $i\in \{1,\ldots,r\}$ such that
$\nn_k = \mm_{\tau(i)}$. Let $B(k)$ be the subset of such indices.
Our aim is to show that one can find $k\in E$ such that $B(k)\neq \emptyset$.

\medskip

\begin{proposition}
Let us assume that $\# E\ge 2$. Then there exists $k\in E$ and $i\in A(k)$ such that
\begin{enumerate}
\item
$n^i_{k} \ge n^j_{k}$ for any $j\in A(k)$
\item
$n^j_k \le n^i_{k'}$ for any  $k'\in E$, $k'\neq k$, and $j\in A(k')$.
\end{enumerate}
Moreover, for any pair $(k,i)$ satistying conditions 1) and 2) above, one has that
$i\in B(k)$ and 
therefore $B(k)\neq \emptyset$.
\end{proposition}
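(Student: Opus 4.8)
The plan is to translate the two extremality conditions 1) and 2) into assertions about the location, in the dual graph $\Gamma$, of the vertices $\tau(i)$ and of the rupture vertices producing the factors of $P^W_C(\tt)$, and then to argue by a descent on those positions. Write $\nn_k=\mm_{\sigma_k}$, where $\sigma_k$ is the vertex realizing the factor $(1-\tt^{\nn_k})^{s_k}$; when $s_k>0$ this $\sigma_k$ is a rupture vertex of $\Gamma$, and $i\in B(k)$ holds exactly when $\sigma_k=\tau(i)$. The two tools I will use are the symmetry $m^i_{\tau(j)}=m^j_{\tau(i)}$ of the matrix $(m_{\sigma\delta})$ and Proposition~\ref{prop:P1}, by which, for a fixed pair $(i,j)$, the function $q(\alpha)=m^j_\alpha/m^i_\alpha$ is strictly decreasing along the geodesic $[\tau(j),\tau(i)]$ and constant on the subtrees hanging off it; combined with the inequality $\frac{1}{m^i_\alpha}\mm_\alpha\ge\frac{1}{m^i_{\tau(i)}}\mm_{\tau(i)}$ recorded just before~(\ref{eq*}) and with the remark following Proposition~\ref{prop:P1} (which rests on Lemma~\ref{lemma:0}), these show that, for $i\in A(k)$, the vertex $\sigma_k$ lies on the path leaving $\tau(i)$. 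In these terms condition 1) for $(k,i)$ reads $m_{\sigma_k\tau(i)}\ge m_{\sigma_k\tau(j)}$ for all $j\in A(k)$, and condition 2) reads $m_{\sigma_k\tau(j)}\le m_{\sigma_{k'}\tau(i)}$ for all $k'\in E\setminus\{k\}$ and $j\in A(k')$.

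For the existence of a pair satisfying 1) and 2) I would take, among all $k\in E$ and $i\in A(k)$, a pair for which $m_{\sigma_k\tau(i)}$ is maximal. Then 2) holds by maximality, since for $i\in A(k)$ and $j\in A(k')$ the inequality $m_{\sigma_{k'}\tau(i)}\ge m_{\sigma_{k'}\tau(j)}$ is the instance of condition 1) for the pair $(k',j)$ and, $\sigma_{k'}\tau(i)$ being one of the entries competing in the maximality, the chosen value dominates; and 1) holds because $\sigma_k$ is fixed once $k$ is, so $m_{\sigma_k\tau(i)}$ is maximal also among the $m_{\sigma_k\tau(j)}$ with $j\in A(k)$.

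For the ``moreover'', let $(k,i)$ satisfy 1) and 2) and suppose $i\notin B(k)$, i.e.\ $\sigma:=\sigma_k\neq\tau(i)$; then $\sigma$ is a rupture vertex lying strictly inside the path out of $\tau(i)$. First, $\sigma\neq\tau(j)$ for every branch $j$: otherwise $\kappa(j)=k$ (as $\mm_{\tau(j)}=\mm_\sigma=\nn_k$ appears and realizes the normalized minimum for $j$), hence $j\in A(k)$, and condition 1) applied to $j$ gives $m_{\sigma\tau(i)}\ge m_{\sigma\tau(j)}=m_{\sigma\sigma}$, impossible because at a rupture vertex the diagonal entry of $(m_{\sigma\delta})$ strictly dominates the rest of its row. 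Next, since $\#E\ge 2$, the rupture vertex $\sigma$ — which distinguishes at least two of the factors of $P^W_C(\tt)$ — must separate $\tau(i)$ from some other $\tau(j)$, so $\sigma$ lies strictly inside $[\tau(j),\tau(i)]$ and Proposition~\ref{prop:P1} yields $n^j_k/n^i_k=q(\sigma)>q(\tau(i))=m^j_{\tau(i)}/m^i_{\tau(i)}$. Put $k':=\kappa(j)$; then $k'\neq k$ (the vertex $\sigma_{k'}$ lies on the path out of $\tau(j)$ leading away from the other branches, hence is not $\sigma$). Comparing the multiplicities at $\sigma$, $\sigma_{k'}$ and at $\tau(j)$, $\tau(i)$ by means of Proposition~\ref{prop:P1}, the symmetry, and the row-domination of $(m_{\sigma\delta})$ at rupture vertices, one gets $n^i_{k'}=m_{\sigma_{k'}\tau(i)}<m_{\sigma\tau(j)}=n^j_k$, contradicting condition 2). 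Hence $\sigma=\tau(i)$, i.e.\ $\nn_k=\mm_{\tau(i)}$ and $i\in B(k)$, so $B(k)\neq\emptyset$.

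The step I expect to be the main obstacle is the final comparison $n^i_{k'}<n^j_k$, together with the two uses of ``row-domination at a rupture vertex'': these are not formal identities, and to establish them one has to track how the vectors $\mm_\bullet$ change along chains of valency-two vertices, dead arcs (where Lemma~\ref{lemma:0} is used) and rupture vertices, arguing separately according to the type of the rational double point and to the position of $\sigma$ and $\sigma_{k'}$ relative to the minimal subtree of $\Gamma$ spanned by $\tau(1),\dots,\tau(r)$. This is the same kind of case-by-case bookkeeping already carried out in the proof of Lemma~\ref{lemma:curve_preresolution}, and it is also the place where the ``up to the symmetry of the dual graph'' qualification of Theorems~\ref{theo:Main_curves} and~\ref{theo:Main_divisorial} has to be kept in mind.
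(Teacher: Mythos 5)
Your existence argument does not work, and the gap is not merely technical. Choosing $(k,i)$ to maximize $n^i_k$ gives condition 1), but condition 2) asks for $n^j_k\le n^i_{k'}$, which compares an entry of row $k$ in a column $j\in A(k')$ with an entry of row $k'$ in column $i$; neither is controlled by the maximality of $n^i_k$, and your justification (invoking ``condition 1) for the pair $(k',j)$'' to bound $m_{\sigma_{k'}\tau(i)}$ by entries indexed by $A(k')$, although $i\notin A(k')$ and condition 1) is not known for arbitrary pairs) is circular. In fact no purely numerical selection argument can prove the statement: for an abstract array $(n^i_k)$ with disjoint sets $A(k)$ the assertion is false (e.g.\ $E=\{1,2\}$, $A(1)=\{a\}$, $A(2)=\{b,c\}$, $n^b_1=1$, $n^c_1=3$, $n^a_2=2$, $n^b_2=10$, $n^c_2=5$: every candidate pair violates 1) or 2)). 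The paper's proof of existence is genuinely geometric: one first shows that for $j\in A(k)\setminus B(k)$ one has $\chi(\oE_{\tau(j)})=0$, so $\tau(j)$ is a deadend of the graph of $C\setminus\{C_j\}$ and, via Lemma~\ref{lemma:0}, $\nn_{\kappa(j)}$ is an integer multiple $N>1$ of $\mm_{\tau(j)}$; then, using the pre-resolution graph $\Gamma'$ and the hypothesis $\#E\ge 2$, one proves that $B(k)\ne\emptyset$ for some $k\in E$, and only for such a $k$ does the ``maximal $n^i_k$ within $A(k)$'' choice satisfy both conditions (the inequality $n^{i}_{k'}\ge n^j_k$ coming precisely from the $N$-multiple relation, with equality when $j\in B(k')$).

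The ``moreover'' part of your proposal also rests on unproved and partly false claims. The ``row-domination at rupture vertices'' of $(m_{\sigma\delta})$ is not a general property of these matrices (already for $\E_7$ one has $m_{34}=6>m_{44}=7/2$), so it needs proof exactly at the vertices you use it; the assertion $\kappa(j)\ne k$ is unjustified (the branch $j$ separated from $\tau(i)$ by $\sigma$ may well lie in $A(k)$, and condition 2) says nothing about $k'=k$); and the step ``since $\#E\ge2$, the rupture vertex $\sigma$ must separate $\tau(i)$ from some other $\tau(j)$'' fails precisely in the configurations the paper has to treat separately, namely when $\sigma$ is a star vertex of $\Gamma'$ whose adjacent arcs are dead arcs of the surface resolution and all elements of $A(k)$ are curvettes at ends of $\Gamma'$ (these are the configurations listed in the table for $\#E=1$); there the hypothesis $\#E\ge2$ is used by passing to another $k'\in E$, not through any separation property. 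Finally, the key comparison $n^i_{k'}<n^j_k$ that your contradiction needs is exactly the part you defer to unspecified ``case-by-case bookkeeping'', whereas in the paper it follows cleanly from $m^j_{\tau(i)}=m^i_{\tau(j)}$ together with Lemma~\ref{lemma:0}. As it stands, the proposal leaves both the existence and the characterization unproven.
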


\begin{proof}
Let $j\in A(k)$, $j\notin B(k)$. One has
$$
\frac {1}{n_{k}^{j}}\nn_{k} > \frac{1}{m_{\tau(j)}^{j}} \mm_{\tau(j)}
$$
and therefore $\chi(\oE_{\tau(j)})=0$. This
implies that $\tau(j)$ is connected with only one vertex in $\Gamma$ (plus
the  arrow corresponding to $\w{C}_j$), i.~e., $\tau(j)$ is a deadend of the
resolution graph of the curve $C\setminus\{C_j\}$.
Let
$\sigma\in \Gamma$ be such that $\nn_k= \mm_\sigma$ and assume that
$i\in B(k)\neq \emptyset$. In this case
$\nn_k = \mm_{\tau(i)}$ and one has 
$n^i_{k}=m^i_{\tau(i)} = N m^i_{\tau(j)}$,
with an integer $N>1$,
being $\tau(j)$ a deadend of the dual graph of $C\setminus \C_j\}$
(see~\ref{lemma:0}). In particular
$n^i_k > m^i_{\tau(j)} = m^j_{\tau(i)} = n^j_{k}$.
Notice that if $i,s\in B(k)$ one has also that $\nn_k = \mm_{\tau(s)}$ and 
therefore $n^i_{k} = n^s_k$.
As a consequence, if $B(k)\neq \emptyset$ one can fix an index $i(k)\in B(k)$ taking a
maximal one of the set $\{n^i_k : i\in A(k)\}$ as in the condition 1) of the statement.

Let $k'\in E$, $k'\neq k$, and $j\in A(k')$. If $j\in B(k')$, then
$\nn_{k'} = \mm_{\tau(j)}$ and therefore
$n^{i(k)}_{k'} = m^{i(k)}_{\tau(j)} = m^j_{\tau(i(k))} = n^j_{k}$.
Otherwise $j\in A(k')\setminus B(k')$ and one has that
$n^j_k = m^{j}_{\tau(i(k))} = m^{i(k)}_{\tau(j)}$ and also
$n^{i(k)}_{k'} = N m^{i(k)}_{\tau(j)}$ for 
a positive integer $N$. As a
consequence
$n^{i(k)}_{k'} > n^j_k$ and 
therefore $i(k)$ satisfies the second requirement of the
proposition.

In order to finish the proof one has to prove that $B(k)\neq \emptyset$ for some
$k\in E$.
Let us assume that $B(k)=\emptyset $ for some $k\in E$ and
$\nn_k= \mm_\sigma$, for some $\sigma\in \Gamma$ with $v(\sigma)\ge 3$ (here
$v(\sigma)$ is the valency of the vertex $\sigma$).
Let $\pi': (\XX',\DD' )\to (S,0)$ be the pre-resolution of $C$ and let
$\Gamma'$ be the resolution graph of $\pi'$. For any $j\in A(k)$ one has that
$\tau(j)$ is an end of $\Gamma\setminus \{\w{C}_j\}$. We will distinguish two cases:

Let us assume that $\sigma\notin \Gamma'$. In this case also $\tau(j)\notin \Gamma'$
is a deadend of the dual resolution graph of the curve $C\setminus \{C_j\}$ and
$A(k)=\{j\}$.
In particular,
the vertex $\sigma$ appears after $\tau(j)$ in the resolution process of a certain
branch $C_i$,
$i\neq j$, which is not a curvette at $E_{\sigma}$. It is clear that in this case
$(1/n^i_{k})\nn_{k} > (1/m^i_{\tau(i)})\mm_{\tau(i)}$ and
$k'=\kappa(i)$ provides a new element $k'\in E$. Using this new element $k'$ we can
repeat the argument up to the moment when we reach $e\in E$ such that $B(e)\neq \emptyset$ (note that
$\nn_k < \nn_{k'}$).

Assume that  $\sigma\in \Gamma'$. If there exists an irreducible component $C_i$ such
that its strict transform by $\pi'$ intersects $\DD'$ at $E_\sigma$ one can repeat
the same argument of the above case for $k'=\kappa(i)$ and so there exists
an element $e\in E$ such that $B(e)\neq \emptyset$.
Otherwise $\sigma\in \Gamma'$ must be a star vertex of $\Gamma'$ and all the
elements of $A(k)$ correspond to curvettes at some of the end points of $\Gamma'$.
However, there is (at most) only one vertex in $\Gamma'$ with such conditions:
the vertex named $k-2$ if $S=\D_k$, the vertex named $3$ for $\E_6, \E_7$ and $\E_8$
and no one for the case $\A_k$. Being $\# E\ge 2$ in order to finish it suffices to
take a new $k'\in E$, $k'\neq k$.
\end{proof}

In order to finish the proof of the Theorem it remains only to treat
the case when $\# E =1$, i.e. $k\in E$ is such that $A(k)=\{1,\ldots,r\}$. First of all we
will identify the cases when $B(k)=\emptyset$.
Taking into account the discussion in the proof of the Proposition above about the
indices $j\in A(k)\setminus B(k)$, the only possibility to have this situation is
when  $\nn_k = \mm_\sigma$ for $\sigma\in \Gamma'$ being a star vertex and 
all the branches are curvettes at some end points of $\Gamma'$ and no two of them
are in the same vertex. This situations can be described (and so detected) one by
one for each of the singularities $\D_k$, $\E_6, \E_7, \E_8$ (note that $\A_k$
does not appear in this situation). The table below describe all the possible choices
of the ends and the corresponding Weil-Poincar\'e series:

$$
\begin{matrix}
\D_k & \{1,k\}  & \mapsto & (1- t_1 t_2^{(k-2)/2}) (1-t_1^{1/2} t_2^{(k-2)/4})^{-1}
\\
& \{k-1,k\} & \mapsto & (1- t_1^{(k-2)/2} t_2^{(k-2)/2}) (1-t_1^{1/2} t_2^{1/2})^{-1}
\\
& \{1,k-1,k\} & \mapsto & (1- t_1 t_2^{(k-2)/2} t_3^{(k-2)/2}) \\
\E_6 & \{1,4\}  & \mapsto & (1- t_1^2 t_2^{3}) (1-t_1^{2/3} t_2^{1})^{-1} \\
 & \{1,6\}  & \mapsto & (1- t_1^2 t_2^{2}) (1-t_1 t_2)^{-1} \\
 & \{1,4,6\}  & \mapsto & (1- t_1^2 t_2^{3}t_3^2) \\
\E_7 & \{1,4\}  & \mapsto & (1- t_1^4 t_2^{6}) (1-t_1^{} t_2^{3/2})^{-1} \\
 & \{1,7\}  & \mapsto & (1- t_1^4 t_2^{3}) (1-t_1^2 t_2{3/2})^{-1} \\
 & \{4,7\}  & \mapsto & (1- t_1^6 t_2^{3}) (1-t_1^2 t_2^{})^{-1} \\
 & \{1,4,7\}  & \mapsto & (1- t_1^4 t_2^{6}t_3^3) \\
\E_8 & \{1,4\}  & \mapsto & (1- t_1^{10} t_2^{15}) (1-t_1^{2} t_2^{3})^{-1} \\
 & \{1,8\}  & \mapsto & (1- t_1^{10} t_2^{6}) (1-t_1^{5} t_2{^3})^{-1} \\
 & \{4,8\}  & \mapsto & (1- t_1^{15} t_2^{6}) (1-t_1^{5} t_2^2)^{-1} \\
 & \{1,4,8\}  & \mapsto & (1- t_1^{10} t_2^{15}t_3^6) \\
\end{matrix}
$$

Now, if the Weil-Poincar\'e series is not one of the above, then $B(k)\neq\emptyset$
and the we can choose an index $i\in B(k)$ as in the Proposition above.
This finishes the proof of Theorem~\ref{theo:Main_curves}.
\end{Proof}

For the proof of Theorem~\ref{theo:Main_divisorial} (an analogue of Theorem~\ref{theo:Main_curves} for divisorial valuations) the projection formula permits to reduce (to split) the case of $r$ valuations to the cases of one valuation $v_1$ and $(r-1)$ remaining valuations. For the $\E_8$-singularity in~\cite{MZ} this finishes the proof (due to the absence of symmetries of the $\E_8$ graph). In the case under consideration the
multiplicity $\mm_{1i}$ determines
whether the vertices of the minimal resolution
graph coresponding to the divisorial valuations $v_1$ and $v_i$ are on the same part from those exchanged by symmetries or on different ones.

Addresses:

A. Campillo and F. Delgado:
IMUVA (Instituto de Investigaci\'on en
Matem\'aticas), Universidad de Valladolid,
Paseo de Bel\'en, 7, 47011 Valladolid, Spain.
\newline E-mail: campillo\symbol{'100}agt.uva.es, fdelgado\symbol{'100}uva.es

S.M. Gusein-Zade:
Moscow State University, Faculty of Mathematics and Mechanics, Moscow, GSP-1, 119991, Russia.
\newline E-mail: sabir\symbol{'100}mccme.ru

\end{document}